\documentclass[a4paper, 11pt, reqno]{amsart}
\usepackage{amsmath,amsfonts,amssymb,amsthm,enumerate}
\usepackage{graphicx}
 \usepackage{xy} \xyoption{all}

\setlength{\textheight}{21.2cm} \setlength{\textwidth}{13.5cm}
\setlength{\topmargin}{0cm} \setlength{\oddsidemargin}{1.5cm}

\newtheorem{thm}{Theorem}[section]
\newtheorem{cor}[thm]{Corollary}
\newtheorem{prop}[thm]{Proposition}
\newtheorem{lem}[thm]{Lemma}

\theoremstyle{definition}
\newtheorem{defn}[thm]{Definition}
\newtheorem{nota}[thm]{Notation}
\newtheorem{exas}[thm]{Examples}
\newtheorem{rem}[thm]{Remark}

\def\sm{\mathop{\sum}\limits_{i=1}^{m}}

\DeclareMathOperator{\End}{End}

\let\phi\varphi
\def\a{\alpha}

\def\d{\delta}
\def\m{\mu}
\def\n{\nu}


\pagestyle{plain}
\begin{document}
\title{Anick type automorphisms and new irreducible representations of Leavitt path algebras}
\maketitle
\begin{center}
Shigeru Kuroda\footnote{Department of Mathematical Sciences, 
Tokyo Metropolitan University, Hachioji, Tokyo 192-0397 Japan. E-mail address: \texttt{kuroda@tmu.ac.jp }} and 
Tran Giang Nam\footnote{Institute of Mathematics, VAST, 18 Hoang Quoc Viet, Cau Giay, Hanoi, Vietnam. E-mail address: \texttt{tgnam@math.ac.vn}
			
\ \ {\bf Acknowledgements}: 
The first author is partly supported by JSPS KAKENHI Grant Number 18K03219.
The second author is supported by the Vietnam National Foundation for Science and Technology Development (NAFOSTED) under Grant 101.04-2020.01.} 
\end{center}

\begin{abstract} In this article, we give a new class of 
automorphisms of Leavitt path algebras of arbitrary graphs. Consequently, we obtain Anick type automorphisms of these Leavitt path algebras and new  
irreducible representations of Leavitt algebras of type $(1, n)$.
\medskip
	
\textbf{Mathematics Subject Classifications 2020}: 16D60, 16D70, 16S88.
	
\textbf{Key words}: Anick type automorphism, Leavitt path algebra, simple module.
\end{abstract}
\medskip
\section{Introduction}

Given a row-finite directed graph $E$ and any field $K$, Abrams and Aranda Pino in \cite{ap:tlpaoag05}, and independently Ara, Moreno, and Pardo in \cite{amp:nktfga}, introduced the \emph{Leavitt path algebra} $L_K(E)$.  Abrams and Aranda Pino later extended the definition in \cite{ap:tlpaoag08} to all countable directed graphs. Goodearl in \cite{g:lpaadl} extended the
notion of Leavitt path algebras $L_K(E)$ to all (possibly uncountable) directed graphs $E$. Leavitt path algebras generalize the Leavitt algebras $L_K(1, n)$ of \cite{leav:tmtoar}, and also contain many other interesting classes of algebras. In addition, Leavitt path algebras are intimately related to graph $C^*$-algebras (see \cite{r:ga}). During  the past fifteen years, Leavitt path algebras have become a topic of intense investigation by mathematicians from across the mathematical spectrum.  We refer the reader to \cite{a:lpatfd} and \cite{AAS:LPA} for a detailed history and overview of Leavitt path algebras.

The study of the module theory over Leavitt path algebras was initiated in \cite{AB:mtolpaakt}, in connection with some questions in algebraic $K$-theory. As an important step in the study of modules over a Leavitt path algebra $L_K(E)$, the study of the simple $L_K(E)$-modules has been investigated in numerous articles, see e.g. \cite{c:irolpa, Rang:ttopiolpaoag, ar:fpsmolpa, ar:lpawammir, Rang:osmolpa, amt:eosmolpa, Rang:lpawfpir, ahls:gsaatr,  ko:fdrolpa, anhnam}. 

Although, in general, classification of simple $L_K(E)$-modules seems to be a quite difficult task, recently there have been obtained a number of interesting results describing special classes of simple modules for Leavitt path algebras among which we mention, for example, the following ones. Following the ideas of Smith \cite{Smith2012}, Chen \cite{c:irolpa} constructed two types of simple modules $N_w$ and $V_{[p]}$ for the Leavitt path algebra $L_K(E)$ of an arbitrary graph $E$ by using various sinks $w$ in $E$ and the equivalence class $[p]$ of infinite paths tail-equivalent to a fixed infinite path $p$ in $E$, respectively. Chen's construction was extended by Ara and Rangaswamy \cite{ar:fpsmolpa} to introduce additive classes of non-isomorphic simple $L_K(E)$-modules $N_v^{B_{H(v)}}$, $N_v^{H(v)}$ and $V^f_{[c^{\infty}]}$ which associated respectively to both infinite emitters $v$ and pairs $(c, f)$ consisting of exclusive cycles $c$ together with irreducible polynomials $f \in K[x]\setminus\{1-x\}$. Rangaswamy \cite{Rang:osmolpa} constructed an additive class of simple $L_K(E)$-modules, $N_{v\infty}$, by using infinite emitters $v$. By a different method from those presented in \cite{ar:fpsmolpa}, \'{Anh} and the second author \cite{anhnam} constructed simple $L_K(E)$-modules $S^f_c$ associated to pairs $(c, f)$ consisting of simple closed paths $c$ together with irreducible polynomials $f$ in $K[x]$. We also should mention that  Ara and Rangaswamy \cite{ar:fpsmolpa} showed that all simple modules over the Leavitt path algebra of a finite graph in which every vertex is in at most one cycle are exactly $N_w$, $V_{[p]}$ and $V^f_{[c^{\infty}]}\cong S^f_c$, which are cited above. Ko\c{c} and \"{O}zaydin \cite{ko:fdrolpa} have classified all finite-dimensional modules for the Leavitt path algebra $L_K(E)$ of a row-finite graph $E$ via an explicit Morita equivalence given by an effective combinatorial (reduction) algorithm on the graph. Those obtained results induce our investigation to the study of simple modules for Leavitt path algebras of graphs having a vertex is at least in two cycles. The most important case of this class is the Leavitt path algebra of a rose with $n\ge 2$ petals---It is exactly the Leavitt algebra $L_K(1, n)$ (see, e.g., Proposition \ref{LA} below).

Following notes cited above, as of the writing of this article, there are two classes of non-isomorphic simple modules for Leavitt path algebras $L_K(R_n)$ of the rose $R_n$ with $n\ge 2$ petals:
\begin{itemize}
\item simple modules $V_{[p]}$ associated to infinite irrational paths $p$;
\item simple modules $S^f_c$ associated to pairs $(c, f)$ consisting of simple closed paths $c$ together with irreducible polynomials $f$ in $K[x]$.
\end{itemize}

Although Leavitt path algebras are studied by many researchers, little is known about their automorphisms. In this article, we give a new class of automorphisms of Leavitt path algebras, which includes analogues of the Anick automorphism of the free associative algebra $K\langle x_1,x_2,x_3\rangle $ (cf.~\cite[p.\ 343]{c:fratr}). The Anick automorphism is well known in the context of the Tame Generators Problems which asks if the automorphism group of the $K$-algebra $K\langle x_1,\ldots ,x_n\rangle $ is generated by the so-called elementary automorphisms. It is notable that, in 2007, Umirbaev~\cite{Umirbaev} solved this problem in the negative when $n=3$ and $K$ is of characteristic zero, by showing that the Anick automorphism cannot be obtained by composing elementary automorphisms. The main goal of this article is to construct additive classes of simple $L_K(R_n)$-modules, by studying the twisted modules of the simple modules $S^f_c$ under Anick type automorphisms of the Leavitt path algebras $L_K(R_n)$. 

The article is organized as follows. In Section 2, we provide a method to construct automorphisms of Leavitt path algebras of graphs (Theorem \ref{Anicktype} and Corollary~\ref{Anicktype1}). Consequently, we obtain Anick type automorphisms of these Leavitt path algebras (Corollaries~\ref{Anicktype2} and \ref{Anicktype3}).
In Section 3, based on Corollary~\ref{Anicktype3} and the simple modules $S^f_c$ mentioned above, we construct new classes of simple $L_K(R_n)$-modules (Theorems~\ref{Irrrep1} and \ref{Irrrep2}).

\section{Anick type automorphisms of Leavitt path algebras}
The aim of this section is to describe automorphisms of Leavitt path algebras of arbitrary graphs  (Theorem~\ref{Anicktype}). Consequently, we provide a method to construct automorphisms of unital Leavitt path algebras  in terms of invertible matrices (Corollary~\ref{Anicktype1}) and Anick type automorphisms of these Leavitt path algebras (Corollaries~\ref{Anicktype2} and \ref{Anicktype3}).

We begin this section by recalling some useful notions of graph theory. 
A (\textit{directed}) \textit{graph} is a quadruplet $E = (E^0, E^1, s, r)$ consists of two disjoint sets $E^0$ and $E^1$, called \emph{vertices} and \emph{edges}
respectively, together with two maps $s, r: E^1 \longrightarrow E^0$.  The vertices $s(e)$ and $r(e)$ are referred to as the \emph{source} and the \emph{range} of the edge~$e$, respectively. 
A vertex~$v$ for which $s^{-1}(v)$ is empty is called a \emph{sink}; a vertex~$v$ is \emph{regular} if $0< |s^{-1}(v)| < \infty$;  a vertex~$v$ is an \textit{infinite emitter} if $|s^{-1}(v)| = \infty$; and a vertex is \textit{singular} if it is either a sink or an infinite emitter. 

A \emph{finite path of length} $n$ in a graph $E$ is a sequence $p = e_{1} \cdots e_{n}$  of edges $e_{1}, \dots, e_{n}$ such that $r(e_{i}) = s(e_{i+1})$ for $i = 1, \dots, n-1$.  In this case, we say that the path~$p$ starts at the vertex $s(p) := s(e_{1})$ and ends at the vertex $r(p) := r(e_{n})$, we write $|p| = n$ for the length of $p$.  We consider the elements of $E^0$ to be paths of length $0$. We denote by $E^*$ the set of all finite paths in $E$.  An edge $f$ is an \textit{exit} for a path $p= e_1 \cdots e_n$ if $s(f) = s(e_i)$ but $f \neq e_i$ for some $1\le i\le n$. A finite path $p$ of positive length is called a \textit{closed path based at} $v$ if $v = s(p) = r(p)$. A \textit{cycle} is a closed path $p = e_{1} \cdots e_{n}$, and for which the vertices $s(e_1), s(e_2), \hdots, s(e_n)$ are distinct. A closed path $c$ in $E$ is called \textit{simple} if $c \neq d^n$ for any closed path $d$ and integer $n\ge 2$. We denoted by $SCP(E)$ the set of all simple closed paths in $E$.


\begin{defn}\label{LPA}
For an arbitrary graph $E = (E^0,E^1,s,r)$
and any  field $K$, the \emph{Leavitt path algebra} $L_{K}(E)$ {\it of the graph}~$E$ \emph{with coefficients in}~$K$ is the $K$-algebra generated by the union of the set $E^0$ and two disjoint copies of $E^1$, say $E^1$ and $\{e^*\mid e\in E^1\}$, satisfying the following relations for all $v, w\in E^0$ and $e, f\in E^1$:
\begin{itemize}
\item[(1)] $v w = \delta_{v, w} w$;
\item[(2)] $s(e) e = e = e r(e)$ and $e^*s(e)  = e^* = r(e) e^*$;
\item[(3)] $e^* f = \delta_{e, f} r(e)$;
\item[(4)] $v= \sum_{e\in s^{-1}(v)}ee^*$ for any regular vertex $v$;
\end{itemize}
where $\delta$ is the Kronecker delta.
\end{defn}
If $E^0$ is finite, then $L_K(E)$ is a unital ring having identity $1 = \sum_{v\in E^0}v$ (see, e.g. \cite[Lemma 1.6]{ap:tlpaoag05}).
It is easy to see that the mapping given by $v\longmapsto v$ for all $v\in E^0$, and $e\longmapsto e^*$, $e^*\longmapsto e$ for all $e\in E^1$, produces an involution on the algebra $L_K(E)$, and for any path $p = e_1e_2\cdots e_n$, the element $e^*_n\cdots e^*_2e^*_1$ of $L_K(E)$ is denoted by $p^*$.
It can be shown (\cite[Lemma 1.7]{ap:tlpaoag05}) that $L_K(E)$ is  spanned as a $K$-vector space by $\{pq^* \mid p, q\in F(E), r(p) = r(q)\}$. Indeed, $L_K(E)$ is a $\mathbb{Z}$-graded $K$-algebra: $L_K(E) = \oplus_{n\in \mathbb{Z}}L_K(E)_n$, where for each $n\in \mathbb{Z}$, the degree $n$ component $L_K(E)_n$ is the set $ \text{span}_K \{pq^*\mid p, q\in E^*, r(p) = r(q), |p|- |q| = n\}$.
Also, $L_K(E)$ has the following property: if $\mathcal{A}$ is a $K$-algebra generated by a family of elements $\{a_v, b_e, c_{e^*}\mid v\in E^0, e\in E^1\}$ satisfying the relations analogous to (1) - (4)  in Definition~\ref{LPA}, then there always exists a $K$-algebra homomorphism $\varphi: L_K(E)\longrightarrow \mathcal{A}$ given by $\varphi(v) = a_v$, $\varphi(e) = b_e$ and $\varphi(e^*) = c_{e^*}$.  We will refer to this property as the Universal Property of $L_K(E)$.

The following theorem provides us with a method to construct automorphisms of Leavitt path algebras of arbitrary graphs.

\begin{thm}\label{Anicktype}
Let $K$ be a field, $n$ a positive integer, $E$ a graph, and $v$ and $w$ vertices in $E$ (they may be the same). Let $e_1, e_2, \ldots, e_n$ be distinct edges in $E$ with $s(e_i) = v$ and $r(e_i) = w$ for all $1\le i\le n$. Let $P=(p_{i,j})$ and $Q = (q_{i, j})$ be elements of $M_n(L_K(E))$ such that $wP=Pw$, $wQ=Qw$ and $wPQ = wQP = wI_n$. Then the following statements hold:
	
{\rm (i)} There exists a unique homomorphism $\phi_{P, Q}:L_K(E)\to L_K(E)$ of $K$-algebras satisfying 
\begin{center}
$\phi_{P, Q}(u)=u,\quad \phi_{P, Q}(e)=e\quad\text{ and }\quad \phi_{P, Q}(e^*)=e^*$ \end{center} for all $u\in E^0$ and $e\in E^1\setminus \{ e_1,\ldots ,e_n\} $, and 
\begin{center}
$\phi_{P, Q}(e_i) = \sum^n_{k=1}e_kp_{k,i}$\quad	and \quad $\phi_{P, Q}(e^*_i) = \sum^n_{k=1}q_{i,k}e^*_k$
\end{center} for all $1\le i\le n$.

{\rm (ii)} If $w\phi_{P, Q}(p_{i,j})= wp_{i,j}$ for all $1\le i, j\le n$, or $w\phi_{P, Q}(q_{i,j})= wq_{i,j}$ for all $1\le i, j\le n$, then $\phi_{P, Q}$ is an isomorphism and $\phi_{P, Q}^{-1}=\phi_{Q, P}$. 
\end{thm}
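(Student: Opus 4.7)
For part~(i), the plan is to invoke the Universal Property of $L_K(E)$: it suffices to check that the proposed images satisfy the defining relations (1)--(4) of Definition~\ref{LPA}. Relations (1) and (2) reduce to the identities $e_k = ve_k = e_k w$ combined with $wP = Pw$. For relation (3), I compute
\[
\phi_{P,Q}(e_i^*)\phi_{P,Q}(e_j) = \sum_{k} q_{i,k}\, w\, p_{k,j} = (QP)_{i,j}\, w,
\]
which equals $\delta_{ij}w$ by $wQP = wI_n$ together with the fact that $(QP)_{i,j}$ commutes with $w$. For relation (4), $\sum_i \phi_{P,Q}(e_i)\phi_{P,Q}(e_i^*) = \sum_{k,l} e_k (PQ)_{k,l} e_l^*$, where each summand collapses to $\delta_{k,l}\,e_k e_l^*$ via $e_k = e_k w$, $e_l^* = we_l^*$, and $wPQ = wI_n$.

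For part~(ii), I first show that the two alternative hypotheses are equivalent: applying $\phi = \phi_{P,Q}$ entrywise to $wPQ = wI_n$ yields $w\phi(P)\phi(Q) = wI_n$, and under $w\phi(P) = wP$ this becomes $wP(\phi(Q) - Q) = 0$, whence left-multiplication by $Q$ and $wQP = wI_n$ give $w\phi(Q) = wQ$ (the converse is symmetric). Assuming both, I verify $\phi_{P,Q}\circ\phi_{Q,P} = \mathrm{id}$ on generators. Vertices and edges outside $\{e_1,\ldots,e_n\}$ (and their duals) are fixed by both maps. For $e_i$, successive applications of $e_l = e_l w$, $wP = Pw$, the $q$-hypothesis, and $wQ = Qw$ yield
\[
e_l p_{l,k}\,\phi_{P,Q}(q_{k,i}) = e_l p_{l,k}\,w\,\phi_{P,Q}(q_{k,i}) = e_l p_{l,k}\,w q_{k,i} = e_l p_{l,k} q_{k,i}\, w,
\]
so $\phi_{P,Q}(\phi_{Q,P}(e_i)) = \sum_l e_l (PQ)_{l,i}\, w = e_i$ via $wPQ = wI_n$; a symmetric computation using the $p$-hypothesis handles $e_i^*$.

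The companion identity $\phi_{Q,P}\circ\phi_{P,Q} = \mathrm{id}$ is the main obstacle. My plan is to establish the analog of the hypothesis for $\phi_{Q,P}$, namely $w\phi_{Q,P}(P) = wP$, and then apply the argument of the previous paragraph with $(P,Q)$ replaced by $(Q,P)$. To obtain this analog, I combine $\phi_{P,Q}(\phi_{Q,P}(p_{ij})) = p_{ij}$ (from $\phi_{P,Q}\circ\phi_{Q,P} = \mathrm{id}$) with the hypothesis $w\phi_{P,Q}(p_{ij}) = wp_{ij}$ to derive $w\phi_{P,Q}\bigl(\phi_{Q,P}(p_{ij}) - p_{ij}\bigr) = 0$; since $p_{ij}$ and $\phi_{Q,P}(p_{ij})$ both commute with $w$, the difference effectively lives in the corner $wL_K(E)w$, and an injectivity analysis of the restriction of $\phi_{P,Q}$ to $wL_K(E)$ (which is surjective by the previous step) is expected to yield $w\bigl(\phi_{Q,P}(p_{ij}) - p_{ij}\bigr) = 0$, giving the desired analog and hence $\phi_{P,Q}^{-1} = \phi_{Q,P}$.
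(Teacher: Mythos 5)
Your part (i) and the first half of part (ii) track the paper's argument: checking relations (1)--(4) and invoking the Universal Property, showing the two hypotheses are equivalent (so both $w\phi_{P,Q}(P)=wP$ and $w\phi_{P,Q}(Q)=wQ$ hold), and verifying $\phi_{P,Q}\circ\phi_{Q,P}=\mathrm{id}$ on generators, which gives surjectivity of $\phi_{P,Q}$. The genuine gap is in the last step. You reduce everything to an ``injectivity analysis of the restriction of $\phi_{P,Q}$ to $wL_K(E)$'' which ``is expected to yield'' $w(\phi_{Q,P}(p_{i,j})-p_{i,j})=0$, but no such analysis is given, and it is precisely the nontrivial content: a surjective endomorphism of $L_K(E)$ (or of a corner $wL_K(E)w$, which in general is neither simple nor Noetherian) need not be injective, and the kernel of $\phi_{P,Q}$ is a non-graded ideal, so it is not excluded merely by the fact that $\phi_{P,Q}$ fixes all vertices. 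Moreover the plan is circular in structure: if you had injectivity of $\phi_{P,Q}$ (even just enough to cancel it on the relevant elements), then together with surjectivity and $\phi_{P,Q}\phi_{Q,P}=\mathrm{id}$ you would already conclude $\phi_{P,Q}^{-1}=\phi_{Q,P}$, and the detour through $w\phi_{Q,P}(P)=wP$ and a second application of the generator computation would be unnecessary.

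What the paper does instead is prove injectivity of $\phi_{P,Q}$ directly via the Reduction Theorem \cite[Theorem 2.2.11]{AAS:LPA}: a nonzero $x\in\ker\phi_{P,Q}$ can be multiplied into either a vertex (impossible, since vertices are fixed and nonzero) or a nonzero Laurent polynomial $p(c)$ in a cycle $c$ without exits. In the latter case, if $c$ avoids $e_1,\ldots,e_n$ then $\phi_{P,Q}$ fixes $p(c)$, a contradiction; otherwise the no-exit condition forces $n=1$, the corner $wL_K(E)w$ is isomorphic to $K[x,x^{-1}]$, and the hypothesis $w\phi_{P,Q}(p_{1,1})=wp_{1,1}$ (together with the unit $wp_{1,1}w$ having inverse $wq_{1,1}w$) forces $wp_{1,1}w=aw$ for a nonzero scalar $a$, whence $\phi_{P,Q}(c^l)=a^lc^l$ and $\phi_{P,Q}(p(c))\neq 0$, again a contradiction. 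Some argument of this kind (or a substitute for it) is indispensable; without it your proposal does not establish that $\phi_{P,Q}$ is an isomorphism.
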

\begin{proof} We first note that $wp_{k,i}=p_{k,i}w$ and $wq_{i,k}=q_{i,k}w$ for all $k, i$ (since $wP = Pw$ and $wQ=Qw$), and $\sum^n_{k=1}wp_{i,k}q_{k, j} = \delta_{i, j}w = \sum^n_{k=1}wq_{i,k}p_{k, j}$ for all $i, j$, where $\delta$ is the Kronecker delta.
	
	(i) We define the  elements $\{Q_u \ | \ u\in E^0\}$ and $\{T_e, T_{e^*} \ | \ e\in E^1\}$ of $L_K(E)$  by setting $Q_u = u$, 	
	\begin{equation*}
	T_e=  \left\{
	\begin{array}{lcl}
	\sum^n_{k=1}e_kp_{k,i}&  & \text{if } e= e_i \text{ for some } 1\le i\le n\\
	e&  & \text{otherwise},  
	\end{array}%
	\right.
	\end{equation*}%
	\medskip
	and 
	\begin{equation*}
	T_{e^*}=  \left\{
	\begin{array}{lcl}
	\sum^n_{k=1}q_{i,k}e^*_k&  & \text{if } e= e_i \text{ for some } 1\le i\le n\\
	e^*&  & \text{otherwise}.
	\end{array}%
	\right.
	\end{equation*}%
	\medskip
	
	\noindent
	We claim that $\{Q_u, T_e, T_{e^*}\mid u\in E^0, e\in E^1\}$ is a family in $L_K(E)$ satisfying the relations analogous to (1) - (4) in Definition~\ref{LPA}. Indeed, we have $Q_u Q_{u'} = u u' = \delta_{u, u'}u = \delta_{u, u'} Q_u$ for all $u, u'\in E^0$, showing relation (1).
	
	For (2), we always have $Q_{s(e)}T_e = T_e = T_eT_{r(e)}$ and $T_{e^*} Q_{s(e)} = T_{e^*} = Q_{r(e)}T_{e^*}$ for all $e\in E^1\setminus \{ e_1,\ldots ,e_n\}$. For each $1\le i\le n$, since 
	$$ve_k=e_kw=e_k,\ \ we_k^*=e_k^*v=e_k^*,\ \ wp_{k,i}=p_{k,i}w\ \ \text{and}\ \ wq_{i,k}=q_{i,k}w$$ for all $k$, we have 
	\begin{align*}
	Q_vQ_{e_i}&=v\sum _{k=1}^ne_kp_{k,i}=\sum _{k=1}^ne_kp_{k,i}=Q_{e_i}\\
	Q_{e_i}Q_w&=\sum _{k=1}^ne_kp_{k,i}w
	=\sum _{k=1}^ne_kwp_{k,i}
	=\sum _{k=1}^ne_kp_{k,i}=Q_{e_i}\\
	Q_wT_{e^*_i}&=w\sum _{k=1}^nq_{i,k}e_k^*
	=\sum _{k=1}^nq_{i,k}we_k^*
	=\sum _{k=1}^nq_{i,k}e_k^*=T_{e^*_i}\\
	T_{e^*_i}Q_v&=\sum _{k=1}^nq_{i,k}e_k^*v
	=\sum _{k=1}^nq_{i,k}e_k^*=T_{e^*_i}. 
	\end{align*}
	
	For (3), we obtain that $T_{e^*}T_f = e^* f = \delta_{e, f}r(e)$ for all $e, f\in E^1\setminus \{ e_1,\ldots ,e_n\}$. For each $f\in E^1\setminus \{ e_1,\ldots ,e_n\} $ and $1\le i\le n$, we have 
	\begin{center}
		$T_{e^*_i}T_{f}=\sum _{k=1}^nq_{i,k}e_k^*f=0$\quad and\quad $
		T_{f^*}T_{e}=\sum _{k=1}^nf^*e_kp_{k,i}=0,$	
	\end{center}
	since $e_k^*f=f^*e_k=0$. For $i,j\in \{ 1,\ldots ,n\}$, we have 
	\begin{align*}
	&T_{e^*_i}T_{e_j}=\sum _{k=1}^n\sum _{l=1}^nq_{i,k}e_k^*e_lp_{l,j}
	=\sum _{k=1}^n\sum _{l=1}^nq_{i,k}\delta _{k,l}wp_{l,j} 
	=\sum _{k=1}^nwq_{i,k}p_{k,j}=\delta_{i,j}w =\delta_{i,j}Q_w, 
	\end{align*}
	since $e_k^*e_l=\delta _{k,l}w$ and $wp_{l,j}=p_{l,j}w$. 
	
	For (4), let $u$ be a regular vertex in $E$. If $u \neq v$, then $\sum_{e\in s^{-1}(u)}T_eT_{e^*} = \sum_{e\in s^{-1}(u)}ee^* = u = Q_u$. Consider the case when $u=v$, that is, $v$ is a regular vertex. Write $$s^{-1}(v)=\{ e_1,\ldots ,e_n,e_{n+1},\ldots, e_m\}$$
	for some distinct $e_{n+1},\ldots ,e_m\in E^1$ with $n\le m<\infty $. We note that 
	\begin{align*}
	\sum _{i=1}^nT_{e_i}T_{e^*_i}&=\sum _{i=1}^n\sum _{k=1}^n\sum _{l=1}^n e_kp_{k,i} q_{i,l}e^*_l=\sum _{k=1}^n\sum _{l=1}^n e_kw(\sum^n_{i=1}p_{k,i} q_{i,l})e^*_l=\\
	& =\sum _{k=1}^n\sum _{l=1}^n e_k(\delta_{k,l} w)e^*_l=\sum_{k=1}^ne_ke_k^*,
	\end{align*}
	and so, we have 
	$$\sum _{e\in s^{-1}(v)}T_e T_{e^*}	=\sum _{i=1}^mT_{e_i} T_{e^*_i}=
	\sum _{i=1}^me_ie_i^*=v=Q_v,$$ thus showing the claim. Then, by the Universal Property of $L_K(E)$, there exists a $K$-algebra homomorphism $\phi_{P, Q}: L_K(E)\longrightarrow L_K(E)$, which maps $u\longmapsto Q_u$, $e\longmapsto T_e$ and $e^*\longmapsto T_{e^*}$,  as desired.
	
(ii) Let $P'$ and $Q'$ be elements of $M_n(L_K(E))$ obtained from $P$ and $Q$ by applying the homomorphism $\phi_{P, Q}$, respectively. Assume that  $w\phi_{P, Q}(p_{i,j})=wp_{i,j}$ for all $1\le i, j\le n$. 	Then, since $\phi_{P, Q}$ is a $K$-algebra homomorphism and $wPQ =  wI_n$, we have $wP'Q' = wI_n$ and $wPQ' = wI_n$. This implies that \[wQ' = wQPQ' = QwPQ'= QwI_n = wQI_n = wQ,\] that means, $wq_{i, j} = w\phi_{P, Q}(q_{i, j})$ for all $i, j$. Similarly, we receive the fact that if $wQ' = wQ$ then $wP' = wP$. Therefore, in any case, we have both $wP' = wP$ and $wQ' = wQ$.

We claim that $\phi_{P, Q}\phi_{Q, P} = id_{L_K(E)}$. Indeed, it suffices to check that 
\begin{center}
$\phi_{P, Q}\phi_{Q, P}(e_i) = e_i$ and $\phi_{P, Q}\phi_{Q, P}(e^*_i) = e^*_i$ for all $1\le i\le n$.	
\end{center}
 For each $1\le i\le n$, by definition of $\phi_{Q, P}$, $\phi_{Q, P}(e_i) = \sum^n_{k=1}e_kq_{k, i}=\sum^n_{k=1}e_kwq_{k, i}$ and $\phi_{Q, P}(e^*_i) = \sum^n_{k=1}p_{i, k}e^*_k=\sum^n_{k=1}p_{i, k}we^*_k=\sum^n_{k=1}wp_{i, k}e^*_k$,  so 
\begin{align*}
\phi_{P, Q}\phi_{Q, P}(e_i) &=\phi_{P, Q}(\sum^n_{k=1}e_kwq_{k, i})=\sum^n_{k=1}\phi_{P, Q}(e_k)w\phi_{P, Q}(q_{k,i})=\sum^n_{k=1}\sum^n_{l=1}e_lp_{l,k}wq_{k,i}\\
&=\sum^n_{l=1}e_l(\sum^n_{k=1}wp_{l,k}q_{k,i})= \sum^n_{l=1}e_l\delta_{l, i}w= e_iw= e_i
\end{align*} 
and 
\begin{align*}
\phi_{P, Q}\phi_{Q, P}(e^*_i)&=\phi_{P, Q}(\sum^n_{k=1}wp_{i, k}e^*_k)=\sum^n_{k=1}w\phi_{P, Q}(p_{i, k})\phi_{P, Q}(e^*_k)= \sum^n_{k=1}\sum^n_{l=1}wp_{i,k}q_{k,l}e^*_l\\
& =\sum^n_{l=1}(\sum^n_{k=1}wp_{i,k}q_{k,l})e^*_l= \sum^n_{l=1}\delta_{i, l}we^*_l= we^*_i = e^*_i,
\end{align*} 
proving the claim. This implies that $\phi_{P, Q}$ is surjective.
	
We next prove that $\phi_{P, Q}$ is injective. To the contrary, suppose there exists a nonzero element $x\in\ker(\phi_{P, Q})$. Then, by the Reduction Theorem (see, e.g., \cite[Theorem 2.2.11]{AAS:LPA}), there exist $a, b \in L_K(E)$ such that either $axb = u \neq 0$ for some $u\in E^0$, or $axb = p(c)\neq 0$, where $c$ is a cycle in $E$ without exits and $p(x)$ is a nonzero polynomial in $K[x, x^{-1}]$.
	
In the first case, since $axb\in \ker(\phi_{P, Q})$, this would imply that $u= \phi_{P, Q}(u) = 0$ in $L_K(E)$; but each vertex is well-known to be a nonzero element inside the Leavitt path algebra, which is a contradiction.
	
So we are in the second case: there exists a cycle $c$ in $E$ without exits such that $axb = \sum^m_{i=-l}k_ic^i\neq 0$, where $k_i\in K$, $l$ and $m$ are nonnegative integers, and we interpret $c^i$ as $(c^*)^{-i}$ for negative $i$, and we interpret $c^0$ as $u := s(c)$. Write $c = g_1q_2\cdots g_t$, where $g_i\in E^1$ and $t$ is a positive integer. If $g_i\in E^1\setminus \{ e_1,\ldots ,e_n\}$ for all $1\le i\le t$, then $\phi_{P, Q}(c) = c$ and $\phi_{P, Q}(c^*) = c^*$, so $0\neq \sum^m_{i=-l}k_ic^i = \sum^m_{i=-l}k_i\phi_{P, Q}(c^i) = \phi_{P, Q}(axb) = 0$ in $L_K(E)$, a contradiction. Consider the case that there exists a $1\le k\le t$ such that $g_k = e_i$ for some $i$. Then, since $c$ is a cycle without exits, we must have $n = 1$ and $k$ is a unique element such that $g_k = e_1$. Let $\alpha := g_{k+1}\cdots g_tg_1\cdots g_{k-1}e_1$. We have that $\alpha$ is a cycle in $E$ without exits and $s(\alpha) = w$. Since $n=1$, $P= p_{1, 1}$ and $Q= q_{1, 1}$ are two elements of $L_K(E)$ with $wp_{1, 1}q_{1, 1} = w= wq_{1, 1}p_{1, 1}$, so $wp_{1,1}w$ is also a unit of $wL_K(E)w$ with $(wp_{1,1}w)^{-1} = w q_{1,1}w$. Moreover, $\phi_{P, Q}(wp_{1,1}w) = \phi_{P, Q}(w)\phi_{P, Q}(p_{1,1})\phi_{P, Q}(w) =w\phi_{P, Q}(p_{1,1})w= wp_{1,1}w$ and $\phi_{P, Q}(wq_{1,1}w) = \phi_{P, Q}(w)\phi_{P, Q}(q_{1,1})\phi_{P, Q}(w) = w\phi_{P, Q}(q_{1,1})w=wq_{1,1}w$. By \cite[Lemma 2.2.7]{AAS:LPA}, we have \[wL_K(E)w = \{\sum^h_{i=l}k_i\alpha^i\mid k_i\in K, l\le h, h, l\in \mathbb{Z}\}\cong K[x, x^{-1}]\]via an isomorphism that sends $v$ to $1$, $\alpha$ to $x$ and $\alpha^*$ to $x^{-1}$, and so $wp_{1,1}w = a\alpha^s$ and $wq_{1,1}w = a^{-1}\alpha^{-s}$ for some $a\in K\setminus \{0\}$ and $s\in \mathbb{Z}$. If $s\ge 0$, then 
\begin{align*}
a\alpha^s&=wp_{1,1}w= \phi_{P, Q}(wp_{1,1}w) = \phi_{P, Q}(a \alpha^s) = a \phi_{P, Q}(\alpha)^s=\\
&= a (\phi_{P, Q}(g_{k+1}\cdots g_tg_1\cdots g_{k-1}e_1))^s=a (g_{k+1}\cdots g_tg_1\cdots g_{k-1}e_1p_{1,1})^s =\\
&= a (g_{k+1}\cdots g_tg_1\cdots g_{k-1}e_1 wp_{1,1}w)^s = a^{s+1}\alpha^{s(s+1)}
\end{align*}
in $wL_K(E)w$, so $s = 0$, that is, $wp_{1,1}w = aw$ and $wq_{1,1}w = a^{-1}w$. If $s\le 0$, then since $\phi_{P, Q}(wq_{1,1}w) = wq_{1,1}w$, and by repeating the argument described
in the first case,  we obtain that $s = 0$, $wp_{1,1}w = aw$ and $wq_{1,1}w = a^{-1}w$. This implies that 
\begin{align*}
\phi_{P, Q}(c) & =\phi_{P, Q}(g_1\cdots g_{k-1}e_1g_{k+1}\cdots g_t) =(g_1\cdots g_{k-1})e_1p_{1,1} (g_{k+1}\cdots g_t)=\\
&=(g_1\cdots g_{k-1}e_1)wp_{1,1}w (g_{k+1}\cdots g_t)= (g_1\cdots g_{k-1}e_1)aw(g_{k+1}\cdots g_t) = ac,
\end{align*} 
and 
\begin{align*}
\phi_{P, Q}(c^*) & =\phi_{P, Q}(g^*_{t}\cdots g^*_{k+1}e^*_1g^*_{k-1}\cdots g^*_{1}) =(g^*_{t}\cdots g^*_{k+1})q_{1,1}e^*_1 (g^*_{k-1}\cdots g^*_{1})=\\
&=(g^*_{t}\cdots g^*_{k+1})wq_{1,1}w(e^*_1 g^*_{k-1}\cdots g^*_{1})= (g^*_{t}\cdots g^*_{k+1})a^{-1}w(e^*_1 g^*_{k-1}\cdots g^*_{1})=\\& = a^{-1}c^*,
\end{align*} 
so $\phi_{P, Q}(c^l) = a^lc^l$ for all $l\in \mathbb{Z}$. We then have $0\neq \sum^m_{i=-l}k_ia^ic^i = \sum^m_{i=-l}k_i\phi_{P, Q}(c^i) = \phi_{P, Q}(axb) = 0$ in $L_K(E)$, which is a contradiction.
	
In any case, we arrive at a contradiction, and so we infer that $\phi_{P, Q}$ is injective, thus $\phi_{P, Q}$ is an isomorphism with $\phi_{P, Q}^{-1}=\phi_{Q, P}$, finishing the proof.
\end{proof}

Consequently, we obtain a method to construct automorphisms of unital Leavitt path algebras  in terms of invertible matrices.

\begin{cor}\label{Anicktype1}
Let $K$ be a field, $n$ a positive integer, $E$ a graph with finitely many vertices, and $v$ and $w$ vertices in $E$ (they may be the same). Let $e_1, e_2, \ldots, e_n$ be distinct edges in $E$ with $s(e_i) = v$ and $r(e_i) = w$ for all $1\le i\le n$. Let $P=(p_{i,j})$ be a unit of $M_n(L_K(E))$ with $wP=Pw$ and $P^{-1} = (q_{i, j})$. Then the following statements hold:
	
{\rm (i)} There exists a unique homomorphism $\phi_P:L_K(E)\to L_K(E)$ of $K$-algebras satisfying
\begin{center}
$\phi _P(u)=u,\quad \phi _P(e)=e\quad\text{ and }\quad \phi _P(e^*)=e^*$ \end{center} for all $u\in E^0$ and $e\in E^1\setminus \{ e_1,\ldots ,e_n\} $, and 
\begin{center}
$\phi_P(e_i) = \sum^n_{k=1}e_kp_{k,i}$\quad	and \quad $\phi_P(e^*_i) = \sum^n_{k=1}q_{i,k}e^*_k$
\end{center} for all $1\le i\le n$.
	
{\rm (ii)} If $\phi _P(p_{i,j})=p_{i,j}$ for all $1\le i, j\le n$, 
then $\phi _P$ is an isomorphism and $\phi _P^{-1}=\phi _{P^{-1}}$. 
\end{cor}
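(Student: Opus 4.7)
The plan is to reduce the corollary directly to Theorem~\ref{Anicktype} with $Q:=P^{-1}=(q_{i,j})$. Since $E^0$ is finite, $L_K(E)$ is unital, so $M_n(L_K(E))$ has an identity matrix $I_n$ and the relation $PQ=QP=I_n$ makes sense. The task is then to verify the four hypotheses of Theorem~\ref{Anicktype}, namely $wP=Pw$, $wQ=Qw$, $wPQ=wI_n$ and $wQP=wI_n$; once these are in hand, both conclusions will follow essentially for free.

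Three of these identities are immediate: $wP=Pw$ is given by hypothesis, while $wPQ=wI_n$ and $wQP=wI_n$ follow at once from $PQ=QP=I_n$ by left-multiplying by the scalar matrix $wI_n$. The only identity requiring a small computation is $wQ=Qw$. For this I would combine $wP=Pw$ with $PQ=I_n$ to write $PwQ=wPQ=wI_n$, and then left-multiply by $Q$ and use $QP=I_n$ to conclude $wQ=QPwQ=Q(wI_n)=Qw$.

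With the four identities established, part (i) follows by setting $\phi_P:=\phi_{P,Q}$ and quoting Theorem~\ref{Anicktype}(i); the formulas for $\phi_P$ on $E^0$, on $E^1\setminus\{e_1,\ldots,e_n\}$, and on the $e_i$ and $e_i^*$ are already those supplied by the theorem. For part (ii), the standing assumption $\phi_P(p_{i,j})=p_{i,j}$ immediately yields $w\phi_{P,Q}(p_{i,j})=wp_{i,j}$ for all $i,j$, which is precisely the hypothesis needed to invoke Theorem~\ref{Anicktype}(ii). That theorem then produces an isomorphism with inverse $\phi_{Q,P}$, which by the very definition of the construction applied to $P^{-1}$ is $\phi_{P^{-1}}$. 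I do not foresee any real obstacle: the corollary is essentially a bookkeeping reformulation of Theorem~\ref{Anicktype} in the unital setting, the only mildly delicate step being the derivation of $wQ=Qw$ from $wP=Pw$ together with invertibility of $P$.
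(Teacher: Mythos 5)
Your proposal is correct and follows essentially the same route as the paper: both reduce the corollary to Theorem~\ref{Anicktype} with $Q=P^{-1}$, verifying $wP^{-1}=P^{-1}w$ from $wP=Pw$ and invertibility, and the trivial identities $wPP^{-1}=wI_n=wP^{-1}P$. The only difference is cosmetic (the paper conjugates $wP=Pw$ by $P^{-1}$ on both sides rather than using your $QPwQ$ manipulation), so nothing further is needed.
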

\begin{proof}  Since $wP = Pw$, we have $P^{-1}wPP^{-1} = P^{-1}PwP^{-1}$, so $wP^{-1}=P^{-1}w$. Since $PP^{-1}= I_n = P^{-1}P$, it is obvious that $wPP^{-1}= wI_n = wP^{-1}P$. Therefore, the pair of the matrices $P$ and $P^{-1}$ satisfies the conditions analogous to the one of the matrices $P$ and $Q$ in Theorem \ref{Anicktype}. Then, by Theorem \ref{Anicktype}, we immediately obtain the statements, thus finishing the proof.
\end{proof}

For clarification, we illustrate Theorem \ref{Anicktype} and Corollary~\ref{Anicktype1} by presenting the following example.

\begin{exas}
Let $K$ be a field and $R_1$ the following graph $$R_1 = \xymatrix{\bullet^{v}\ar@(ul,ur)^e}.$$ Then $L_K(R_1)\cong K[x, x^{-1}]$ via an isomorphism that sends $v$ to $1$, $e$ to $x$ and $e^*$ to $x^{-1}$.	Let $P= e^*$. We have that $P$ is a unit of $L_K(R_1)$ with $P^{-1} = e$. Then, by Corollary~\ref{Anicktype1}, we obtain the endomorphism $\phi_P$ defined by: $v\longmapsto v$, $e\longmapsto eP = ee^* = v$ and $e^*\longmapsto P^{-1}e^* = ee^* = v$. We have that $\phi_P$ is not isomorphic and
$\phi_P(P) = \phi_P(e^*) = v \neq e^*=P$ in $L_K(R_1)$. This implies the hypothesis ``$\phi _P(p_{i,j})=p_{i,j}$ for all $1\le i, j\le n$" in part (ii) of Corollary~\ref{Anicktype1}
cannot be removed.
\end{exas}

In light of the well-known Anick automorphism (see \cite[p.\ 343]{c:fratr}) of the free associative algebra $K\langle x, y, z\rangle$, we construct Anick type automorphisms of unital Leavitt path algebras.

\begin{cor}[Anick type automorphism]\label{Anicktype2}
Let $K$ be a field, $E$ a graph with finitely many vertices, and $v$ and $w$ vertices in $E$ (they may be the same). Let $e_1$ and $e_2$ be two distinct edges in $E$ with $s(e_i) = v$ and $r(e_i) = w$ for all $i$. Let $A_E(e_1, e_2)$ be the $K$-subalgebra of $L_K(E)$ generated by the sets $E^0$, $E^1\setminus \{e_2\}$ and $\{e^*\mid e\in E^1\setminus \{e_1\}\}$. Then, for any $p\in A_E(e_1, e_2)$ with $wp = pw$, 
there exists a unique automorphism $\sigma_p$ 
of the $K$-algebra $L_K(E)$ satisfying 
$$
\sigma _p(e_2)=e_2+e_1p,\ \sigma_p(e_1^*)=e_1^*-pe_2^* ,\ 
\sigma^{-1}_p(e_2)=e_2 - e_1p,\ \sigma^{-1}_p(e_1^*)=e_1^* +pe_2^*
$$
and $\sigma_p(q) = q$ for all $q\in A_E(e_1, e_2)$. 
\end{cor}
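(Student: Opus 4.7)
The plan is to realize $\sigma_p$ as the homomorphism $\phi_{P,Q}$ produced by Theorem \ref{Anicktype} for a suitable $2\times 2$ pair $(P,Q)$. Comparing the prescribed values $\sigma_p(e_1)=e_1$, $\sigma_p(e_2)=e_2+e_1p$, $\sigma_p(e_1^*)=e_1^*-pe_2^*$, $\sigma_p(e_2^*)=e_2^*$ with the formulas $\phi_{P,Q}(e_i)=\sum_k e_kp_{k,i}$ and $\phi_{P,Q}(e_i^*)=\sum_k q_{i,k}e_k^*$ forces the guess
\[
P=\begin{pmatrix} w & p \\ 0 & w \end{pmatrix},\qquad Q=\begin{pmatrix} w & -p \\ 0 & w \end{pmatrix}.
\]

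First I would check the matrix hypotheses of Theorem \ref{Anicktype}. Entry-wise, $wP=Pw$ and $wQ=Qw$ reduce to $w\cdot w=w\cdot w$, $w\cdot 0=0\cdot w$, and $wp=pw$, the last being given. A short direct computation of $PQ$ and $QP$, using $w^{2}=w$ and $wp=pw$ to kill the off-diagonal entries, yields $wPQ=wQP=wI_{2}$, so the pair $(P,Q)$ fulfills every hypothesis of Theorem \ref{Anicktype}.

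Next I would invoke Theorem \ref{Anicktype}(i) to get the endomorphism $\phi_{P,Q}$ and simply read off its action: it fixes every vertex and every edge and starred edge outside $\{e_1,e_2\}$; on the distinguished edges and their stars it gives $e_1\mapsto e_1w=e_1$, $e_2^*\mapsto we_2^*=e_2^*$, $e_2\mapsto e_1p+e_2w=e_2+e_1p$ and $e_1^*\mapsto we_1^*-pe_2^*=e_1^*-pe_2^*$. Thus $\phi_{P,Q}$ fixes every generator of the subalgebra $A_E(e_1,e_2)$, and in particular $\phi_{P,Q}(q)=q$ for all $q\in A_E(e_1,e_2)$. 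Setting $\sigma_p:=\phi_{P,Q}$, the hypothesis $p\in A_E(e_1,e_2)$ then yields $w\phi_{P,Q}(p_{i,j})=wp_{i,j}$ for every entry of $P$, so Theorem \ref{Anicktype}(ii) applies and $\sigma_p$ is a $K$-algebra automorphism with inverse $\phi_{Q,P}$. A symmetric evaluation of $\phi_{Q,P}$ on $e_2$ and $e_1^*$ produces $\sigma_p^{-1}(e_2)=e_2-e_1p$ and $\sigma_p^{-1}(e_1^*)=e_1^*+pe_2^*$, matching the statement.

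Finally, uniqueness is immediate because $A_E(e_1,e_2)$ together with $e_2$ and $e_1^*$ generates $L_K(E)$ as a $K$-algebra, and the stated conditions pin $\sigma_p$ down on each generator. No step is really an obstacle: the proof is essentially the matrix guess plus a verification that $wp=pw$ makes everything work inside $M_{2}(L_K(E))$. The only conceptual point worth flagging is that the role of the subalgebra $A_E(e_1,e_2)$ in the hypothesis is precisely to guarantee $\sigma_p(p)=p$, which is exactly what unlocks the invertibility clause of Theorem \ref{Anicktype}(ii).
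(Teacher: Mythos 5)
Your proposal is correct and takes essentially the same approach as the paper: the paper realizes $\sigma_p$ as $\phi_P$ from Corollary~\ref{Anicktype1} for the unit $P=\left(\begin{smallmatrix}1&p\\0&1\end{smallmatrix}\right)$ of $M_2(L_K(E))$, while you feed the pair $P=\left(\begin{smallmatrix}w&p\\0&w\end{smallmatrix}\right)$, $Q=\left(\begin{smallmatrix}w&-p\\0&w\end{smallmatrix}\right)$ directly into Theorem~\ref{Anicktype}; both are the same elementary-matrix construction, and both hinge on the observation that $\sigma_p$ fixes $A_E(e_1,e_2)$, hence the matrix entries, which unlocks the invertibility clause. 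Your variant has the minor incidental advantage of not using unitality of $L_K(E)$, but the substance of the argument is identical.
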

\begin{proof}
Let $P=\begin{pmatrix}
1& p \\
0& 1
\end{pmatrix}\in M_2(L_K(E))$. We then have that $P$ is a unit of $M_2(L_K(E))$ with $P^{-1}=\begin{pmatrix}
1& -p \\
0& 1
\end{pmatrix}.$ It is clear that $\sigma_p = \phi_P$, which is described in Corollary~\ref{Anicktype1} (i), and $\phi_P(q) = q$ for all $q\in A_E(e_1, e_2)$. Then, using Corollary~\ref{Anicktype1}, we immediately receive the corollary, thus finishing the proof.
\end{proof}

Let $K$ be a field and $n \ge 2$ any integer. Then the  \textit{Leavitt $K$-algebra of type} $(1;n)$, denoted by $L_K(1, n)$,  is the $K$-algebra \[K\langle x_1, \hdots, x_n, y_1, \hdots, y_n\rangle/\langle \sum^{n}_{i=1}x_iy_i -1, y_ix_j - \delta_{i,j}1\mid 1\le i, j\le n \rangle.\] Notationally, it is often more convenient to view $L_K(1, n)$  as the free associative $K$-algebra on the $2n$ variables $x_1, \hdots, x_n, y_1, \hdots, y_n$ subject to the relations $ \sum^{n}_{i=1}x_iy_i =1$ and $y_ix_j = \delta_{i,j}1 \, (1\le i, j\le n)$; see \cite{leav:tmtoar} for more details.

For any integer $n\ge 2$, we let $R_n$ denote the \textit{rose with $n$ petals} graph having one vertex and $n$ loops:
$$R_n = \xymatrix{ & {\bullet^v} \ar@(ur,dr)^{e_1}  \ar@(u,r)^{e_2} \ar@(ul,ur)^{e_3}  \ar@{.} @(l,u) \ar@{.} @(dr,dl)
\ar@(r,d)^{e_n}  \ar@{}[l] ^{\hdots} } \ \ .$$ Then $L_K(R_n)$ 	is defined to be the $K$-algebra generated by $v$, $e_1, \ldots, e_n$, $e^*_1, \ldots, e^*_n$, satisfying the following relations 
\begin{center}
$v^2 =v, ve_i = e_i= e_iv$, $ve^*_i = e^*_i = e^*_i v$, $e^*_i e_j = \delta_{i,j} v$ and $\sum^n_{i=1}e_ie^*_i =v$	
\end{center}
for all $1\le i, j\le n$. In particular $v = 1_{L_K(R_n)}$.

\begin{prop}[{\cite[Proposition 1.3.2]{AAS:LPA}}]\label{LA}
Let $n\ge 2$ be any positive integer, $K$ a field and $R_n$ the rose with petals. Then $L_K(1, n)\cong L_K(R_n)$ as $K$-algebras.	
\end{prop}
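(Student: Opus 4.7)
The plan is to construct mutually inverse $K$-algebra homomorphisms between $L_K(R_n)$ and $L_K(1,n)$ by invoking on each side a universal property: on the Leavitt path algebra side, the Universal Property of $L_K(E)$ recalled in the excerpt, and on the Leavitt algebra side, the defining presentation of $L_K(1,n)$ as a quotient of the free associative algebra by the Leavitt relations. The natural guess for the generators is $v\longleftrightarrow 1$, $e_i\longleftrightarrow x_i$, $e_i^*\longleftrightarrow y_i$, so the proof should reduce to checking that these correspondences respect the two sets of defining relations.

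First, to construct $\phi:L_K(R_n)\to L_K(1,n)$, I would set $a_v=1$, $b_{e_i}=x_i$, $c_{e_i^*}=y_i$ inside $L_K(1,n)$ and verify the four relations of Definition~\ref{LPA}. Relation (1) is immediate from $1\cdot 1=1$; relation (2) follows because $1$ is the identity of $L_K(1,n)$; relation (3) is precisely $y_ix_j=\delta_{i,j}1$, which is one of the defining relations of $L_K(1,n)$; and relation (4), which here reads $\sum_{i=1}^n b_{e_i}c_{e_i^*}=a_v$, is precisely $\sum_{i=1}^n x_iy_i=1$, the other defining relation. The Universal Property of $L_K(R_n)$ then yields $\phi$. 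In the other direction, to construct $\psi:L_K(1,n)\to L_K(R_n)$, I would map $x_i\longmapsto e_i$ and $y_i\longmapsto e_i^*$; well-definedness amounts to the facts that $\sum_ie_ie_i^*=v=1_{L_K(R_n)}$ (the CK2 relation at the unique regular vertex) and $e_i^*e_j=\delta_{i,j}v=\delta_{i,j}1_{L_K(R_n)}$ (the CK1 relation), both of which hold by Definition~\ref{LPA}.

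Finally, I would check that $\phi$ and $\psi$ are mutually inverse by evaluating each composition on the respective generating sets: $\psi\circ\phi$ fixes $v$, $e_i$, and $e_i^*$, while $\phi\circ\psi$ fixes $x_i$ and $y_i$. Since these sets generate the respective algebras, both compositions are the identity, and the isomorphism $L_K(1,n)\cong L_K(R_n)$ follows.

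There is no genuine obstacle in this argument; the proof is essentially bookkeeping. The only point that deserves care is that $L_K(R_n)$ is unital with identity $v$ (since $E^0=\{v\}$ is finite), so the correspondence $v\longleftrightarrow 1$ is forced and makes the CK1 and CK2 relations in $L_K(R_n)$ line up exactly with the two Leavitt relations $y_ix_j=\delta_{i,j}1$ and $\sum x_iy_i=1$. Once that matching is observed, both universal properties apply cleanly and yield the isomorphism.
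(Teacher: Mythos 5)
Your proposal is correct and follows essentially the same route as the paper, which simply exhibits the map $1\mapsto v$, $x_i\mapsto e_i$, $y_i\mapsto e_i^*$ and asserts it is an isomorphism (citing \cite[Proposition 1.3.2]{AAS:LPA}); you merely spell out the verification of the relations and the mutual inverse via the two universal properties.
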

\begin{proof}
We can show that the map $\phi: L_K(1, n)\longrightarrow L_K(R_n)$, given by the extension of $\phi(1) = v$, $\phi(x_i) = e_i$	and $\phi(y_i) = e^*_i$, is a $K$-algebra isomorphism. 
\end{proof}

With Proposition~\ref{LA} in mind, for the remainder of this article we investigate the structure of the Leavitt algebra $L_K(1, n)$ by equivalently investigating the structure of the Leavitt path algebra 
$L_K(R_n)$.

\begin{nota}
For any integer $n\ge 2$ and any field $K$, we denote by $A_{R_n}(e_1, e_2)$ the $K$-subalgebra of $L_K(R_n)$ generated by $$v, e_1, e_3, \hdots , e_n, e^*_2, \hdots , e^*_n.$$	
\end{nota}
We should mention that by \cite[Theorem 1]{aajz:lpaofgkd}, the following elements form a basis of the $K$-algebra $A_{R_n}(e_1, e_2)$: (1) $v$, (2) $p= e_{k_1}\cdots e_{k_m}$, where $k_i \in \{1, 3, \hdots , n\}$, (3) $q^* = e^*_{t_1}\cdots  e^*_{t_h}$, where $t_i \in \{2, 3, \hdots , n\}$,
(4) $pq^*$, where $p$ and $q^*$ are defined as in items (2) and (3), respectively.
\medskip

The following result provides us with Anick type automorphisms of Leavitt algebras of type $(1, n)$.

\begin{cor}\label{Anicktype3}
Let $n\ge 2$ be a positive integer, $K$ a field and $R_n$ the rose with $n$ petals. Then, for any $p \in A_{R_n}(e_1, e_2)$, 
there exists a unique automorphism $\sigma_p$ of the $K$-algebra $L_K(R_n)$ 
satisfying 
$\sigma_p(e_2) = e_2 + e_1p$, 
$\sigma_p(e^*_1) = e^*_1 - pe^*_2$, 
$\sigma^{-1}_p(e_2) = e_2 - e_1p$, 
$\sigma^{-1}_p(e^*_1) = e^*_1 + pe^*_2$ 
and $\sigma_p(q) = q$ for all $q\in A_{R_n}(e_1, e_2)$. 
\end{cor}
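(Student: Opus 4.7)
The plan is to deduce Corollary~\ref{Anicktype3} as an immediate specialization of Corollary~\ref{Anicktype2} with $E = R_n$. First I would identify the relevant pieces of data: the rose $R_n$ has a single vertex, which I denote $v$, and every loop $e_i$ satisfies $s(e_i) = v = r(e_i)$. So in the notation of Corollary~\ref{Anicktype2} I would take both distinguished vertices ``$v$'' and ``$w$'' to be this unique vertex of $R_n$, and the two singled-out edges to be $e_1$ and $e_2$ themselves, which are indeed distinct edges with common source and range $v$.

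Next I would verify the single nontrivial hypothesis of Corollary~\ref{Anicktype2}, namely $wp = pw$. Because $R_n$ has only one vertex and $v = 1_{L_K(R_n)}$, the equation $vp = p = pv$ holds automatically for every element $p$ of $L_K(R_n)$, and in particular for every $p \in A_{R_n}(e_1, e_2)$. I would also observe that the subalgebra $A_{R_n}(e_1, e_2)$ defined in the Notation just before this corollary coincides, by construction, with the subalgebra $A_E(e_1, e_2)$ introduced in the statement of Corollary~\ref{Anicktype2} when $E = R_n$, since both are generated by $v$ together with $\{e_1, e_3, \ldots, e_n\}$ and $\{e_2^*, e_3^*, \ldots, e_n^*\}$.

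With these two points in place, Corollary~\ref{Anicktype2} applies verbatim and directly produces the desired automorphism $\sigma_p$ of $L_K(R_n)$ with the prescribed values on $e_2$ and $e_1^*$, the stated formulas for $\sigma_p^{-1}(e_2)$ and $\sigma_p^{-1}(e_1^*)$, and the identity $\sigma_p(q) = q$ for all $q \in A_{R_n}(e_1, e_2)$, together with uniqueness.

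There is no substantial obstacle here: the entire argument is a one-line reduction to Corollary~\ref{Anicktype2}. The only point worth remarking on is that the commutation requirement $wp = pw$, which was essential in the general statement, becomes automatic in this setting because the unique vertex of $R_n$ is the multiplicative identity of $L_K(R_n)$; hence the parameter $p$ is unconstrained within $A_{R_n}(e_1, e_2)$.
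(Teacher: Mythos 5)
Your proposal is correct and is essentially identical to the paper's own proof, which simply notes that $vp = p = pv$ holds automatically (since $v = 1_{L_K(R_n)}$) and then invokes Corollary~\ref{Anicktype2}. Your additional observation that $A_{R_n}(e_1,e_2)$ coincides with $A_E(e_1,e_2)$ for $E = R_n$ is a harmless explicit check of what the paper leaves implicit.
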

\begin{proof}
Since $vp =p =pv$, the corollary immediately follows from Corollary~\ref{Anicktype2}.	
\end{proof}

\section{New irreducible representations of $L_K(R_n)$}	

In this section, we study the twisted modules of the simple $L_K(R_n)$-modules $S^f_c$ mentioned in the Introduction under Anick type automorphisms of $L_K(R_n)$ introduced in  Corollary~\ref{Anicktype3}. In particular, we obtain  new classes of simple $L_K(R_n)$-modules (Theorems~\ref{Irrrep1} and \ref{Irrrep2}).

Let $E$ be an arbitrary graph. An \textit{infinite path} $p:= e_1\cdots e_n\cdots$ in a graph $E$ is a sequence of edges $e_1, \hdots, e_n, \hdots $ such that $r(e_i) = s(e_{i+1})$ for all $i$. We denote by $E^{\infty}$ the set of all infinite paths in $E$.
For $p:= e_1\cdots e_n\cdots\in E^{\infty}$  and $n\ge 1$, Chen ([2]) defines $\tau_{> n}(p) = e_{n+1}e_{n+2}\cdots,$ and $\tau_{\le n}(p) = e_1e_2\cdots e_n$. Two infinite paths $p, q$ are said to be \textit{tail-equivalent} (written $p\sim q$) if there exist positive integers $m, n$ such that $\tau_{> n}(p) = \tau_{> m}(q)$. Clearly $\sim$ is an equivalence relation on $E^{\infty}$, and we let $[p]$ denote the $\sim$ equivalence class of the infinite path $p$.

Let $c$ be a closed path in $E$. Then the path $c c c\cdots$ is an infinite path in $E$, which we denote by $c^{\infty}$. Note that if $c$ and $d$ are closed paths in $E$ such that $c = d^n$, then $c^{\infty}=d^{\infty}$ as elements of $E^{\infty}$. The infinite path $p$ is called \textit{rational} in case $p\sim c^{\infty}$ for some closed path $c$. If $p\in E^{\infty}$ is not rational we say $p$ is \textit{irrational}. We denote by $E^{\infty}_{rat}$ and $E^{\infty}_{irr}$ the sets of rational and irrational paths in $E$, respectively.

Given a field $K$ and an infinite path $p$, Chen ([2]) defines $V_{[p]}$ to be the $K$-vector space having $\{q\in E^{\infty}\mid q\in [p]\}$ as a basis, that is, having basis consisting of distinct elements of $E^{\infty}$ which are tail-equivalent to $p$. $V_{[p]}$ is made a left $L_K(E)$-module by defining, for all $q\in [p]$ and all $v\in E^0$, $e\in E^1$, 

$v\cdot q = q$ or $0$ according as $v = s(q)$ or not;

$e \cdot q = eq$ or $0$ according as $r(e) = s(q)$ or not;

$e^* \cdot q = \tau_1(q)$ or $0$ according as $q = e\tau_1(q)$ or not.\\ 
In \cite[Theorem 3.2]{c:irolpa} Chen showed the following result.

\begin{thm}[{\cite[Theorem 3.2]{c:irolpa}}]\label{Chenmod} Let $K$ be a field, $E$ an arbitrary graph and $p, \, q\in E^{\infty}$. Then the following holds:
	
$(1)$	$V_{[p]}$ is a simple left $L_K(E)$-module;
	
$(2)$ $End_K(V_{[p]})\cong K$;
	
$(3)$ $V_{[p]} \cong V_{[q]}$ if and only if $p \sim q$, which happens precisely when $V_{[p]} = V_{[q]}$.
\end{thm}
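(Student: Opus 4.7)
The plan is to handle all three parts using one common device: for any finite prefix $\alpha=\tau_{\leq n}(p)$ of a basis vector, the star element $\alpha^{*}\in L_K(E)$ acts on $V_{[p]}$ as truncation, sending $\alpha\cdot r$ to $r$ and killing every basis vector not beginning with $\alpha$. Combined with the fact that each element of $V_{[p]}$ has finite support and that distinct infinite paths eventually disagree at some edge, this device lets us ``strip off'' prefixes and isolate single basis vectors.

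For (1), given any nonzero $x=\sum_{i=1}^{m}k_iq_i\in V_{[p]}$ with distinct $q_i\in[p]$ and $k_i\in K^{*}$, I would first choose $n$ large enough that the length-$n$ prefixes $\tau_{\leq n}(q_i)$ are pairwise distinct (possible since the $q_i$ are distinct as infinite paths). Then $(\tau_{\leq n}(q_1))^{*}\cdot x=k_1\tau_{>n}(q_1)$, so $\tau_{>n}(q_1)\in L_K(E)\cdot x$. For any target $r\in[p]$, tail-equivalence $r\sim \tau_{>n}(q_1)$ supplies finite paths $\alpha,\beta$ with $\tau_{>|\alpha|}(r)=\tau_{>|\beta|}(\tau_{>n}(q_1))$; the element $\alpha\beta^{*}\in L_K(E)$ then sends $\tau_{>n}(q_1)$ to $r$, and hence $L_K(E)\cdot x=V_{[p]}$.

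For (2), which I read as asserting $\mathrm{End}_{L_K(E)}(V_{[p]})\cong K$, let $\phi$ be an $L_K(E)$-linear endomorphism and expand $\phi(p)=\sum_j k_jr_j$ in the basis. By $L_K(E)$-linearity, $\phi(\tau_{>n}(p))=(\tau_{\leq n}(p))^{*}\cdot\phi(p)$ equals the sum of $k_j\tau_{>n}(r_j)$ over those $r_j$ whose first $n$ edges coincide with those of $p$. Beyond the maximum first-disagreement index between $p$ and the finitely many $r_j\neq p$, only the term $r_j=p$ survives, yielding $\phi(\tau_{>n}(p))=k\tau_{>n}(p)$, where $k$ is the coefficient of $p$ in $\phi(p)$ (understood as $0$ if $p$ is absent from the support). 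Reprefixing via $p=\tau_{\leq n}(p)\cdot\tau_{>n}(p)$ gives $\phi(p)=kp$, so $\phi-k\cdot\mathrm{id}$ kills the cyclic generator $p$; by simplicity of $V_{[p]}$, $\phi=k\cdot\mathrm{id}$.

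For (3), the chain $p\sim q\iff [p]=[q]\iff V_{[p]}=V_{[q]}$ is immediate from the definitions, so only $V_{[p]}\cong V_{[q]}\Rightarrow p\sim q$ requires work. Given an isomorphism $\phi:V_{[p]}\to V_{[q]}$ with $\phi(p)=\sum_j k_jr_j$ (distinct $r_j\in[q]$), the truncation argument from (2) forces, for each $n$, at least one $r_j$ to begin with $\tau_{\leq n}(p)$, else $\phi(\tau_{>n}(p))=0$, contradicting injectivity. The decreasing chain of nonempty subsets $S_n=\{j:r_j\text{ begins with }\tau_{\leq n}(p)\}$ of the finite index set has a common element $j_0$, and the corresponding $r_{j_0}$ then agrees with $p$ on every finite prefix, giving $r_{j_0}=p$; hence $p\in[q]$, i.e., $p\sim q$. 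The principal obstacle throughout is exactly this pigeonhole/truncation step: distinct tail-equivalent paths can share arbitrarily long initial segments, but the finiteness of each module element's support tames this via finitely many witnesses.
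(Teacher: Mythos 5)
Your argument is correct. Note, though, that the paper itself gives no proof of this statement: it is quoted verbatim from Chen's paper (Theorem 3.2 of \cite{c:irolpa}), so there is nothing internal to compare against; what you have written is essentially the standard (Chen-style) argument, built on the observation that for a finite path $\alpha$ the element $\alpha^{*}$ acts on basis vectors as ``truncate if the path begins with $\alpha$, kill otherwise.'' All three steps check out: in (1) the choice of $n$ separating the finitely many prefixes and the element $\alpha\beta^{*}$ realizing tail-equivalence do give $L_K(E)\cdot x=V_{[p]}$; in (2) your reading of $\mathrm{End}_K$ as $\mathrm{End}_{L_K(E)}$ is the intended one, and the reprefixing identity $p=\tau_{\le n}(p)\cdot\tau_{>n}(p)$ together with simplicity from (1) legitimately forces $\phi=k\cdot\mathrm{id}$; in (3) the decreasing chain of nonempty subsets $S_n$ of a finite index set indeed stabilizes, so some $r_{j_0}$ agrees with $p$ on all prefixes and hence equals $p$, while the equivalences $p\sim q\iff V_{[p]}=V_{[q]}$ are immediate from the construction.
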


Theorem \ref{Chenmod} provides us with the following two classes of simple modules for the Leavitt path algebra $L_K(E)$ of an arbitrary graph $E$:

\begin{itemize} 	
\item $V_{[\alpha]}$, where $\alpha \in E^{\infty}_{irr}$;
\item $V_{[\beta]}$, where $\beta \in E^{\infty}_{rat}$.
\end{itemize}
We note that for any $\beta \in E^{\infty}_{rat}$,  $V_{[\beta]} = V_{[c^{\infty}]}$ for some $c \in SCP(E)$. By \cite[Theorem 2.8]{amt:eosmolpa}, we have
$V_{[\beta]} =V_{[c^{\infty}]} \cong L_K(E)v/L_K(E)(c -v)$ as left $L_K(E)$-modules, {i.e.,} it is finitely presented; while $V_{[\alpha]}$ $(\alpha \in E^{\infty}_{irr})$ is, in general, not finitely presented by \cite[Corollary 3.5]{anhnam}. 
In \cite{anhnam} \'{A}nh and the second author constructed simple $L_K(E)$-modules $S^f_c$ associated to pairs $(f, c)$ consisting of simple closed paths $c$ together with irreducible polynomials $f$ in $K[x]$. We will represent again this result in Theorem~\ref{Rangmod} below. To do so, we need some notions.

Let $K$ be a field and $E$ a graph and $c$ a closed path in $E$ based at $v$. Let $f(x) = a_0 + a_1 x + \cdots + a_n x^n$ be a polynomial in $K[x]$. We denote by $f(c)$ the element \[f(c):= a_0v + a_1c + \cdots + a_nc^n\in L_K(E).\] We denote by $K[c]$ the subalgebra of $L_K(E)$ generated by $v$ and $c$. By the $\mathbb{Z}$-grading on $L_K(E)$, $K[c]$ is isomorphic to the polynomial algebra $K[x]$ by the map: $v\longmapsto 1$ and $c\longmapsto x$. We denote by $\text{Irr}(K[x])$ the set of all irreducible polynomials in $K[x]$ written in the form $1 - a_1x - \cdots - a_n x^n$.

\begin{thm}[{cf. \cite[Theorems 4.3 and 4.7]{anhnam}}]\label{Rangmod} Let $K$ be a field, $E$ an arbitrary graph, $c$ a simple closed path in $E$ based at $v$, and $f(x) = 1 - a_1x - \cdots - a_n x^n$ an irreducible polynomial in $K[x]$. Then the following holds:
	
$(1)$ The cyclic left $L_K(E)$-module $S^f_c$ generated by $z$ subject to $z= (a_1c + \cdots + a_nc^n)z$, is simple, and its endomorphism ring is isomorphic to $K[x]/K[x]f(x)$. Moreover, \[S^f_c\cong L_K(E)v/L_K(E)f(c),\] as left $L_K(E)$modules, via the map $z\longmapsto v + L_K(E)f(c)$;

$(2)$ For any $g\in \rm{Irr}$$(K[x])$ and any simple closed path $d$ in $E$, $S^f_c\cong S^g_d$ as left $L_K(E)$-modules if and only if $f = g$ and $c^{\infty} \sim d^{\infty}$.
\end{thm}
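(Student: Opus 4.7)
The argument splits into four stages. First, I would establish the cyclic presentation $S^f_c \cong L_K(E) v / L_K(E) f(c)$. The defining relation $z = (a_1 c + \cdots + a_n c^n) z$ rearranges to $f(c) z = 0$ once one verifies $v z = z$, and this in turn follows directly from the relation itself because $v c^i = c^i$ for every $i \ge 1$. The map $L_K(E) v \to S^f_c$, $\xi \mapsto \xi z$, thus factors through $L_K(E) v / L_K(E) f(c)$, while the universal property of $S^f_c$ as the cyclic module with relation $f(c) z = 0$ provides the reverse map.

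For the simplicity of $S^f_c$, the plan is to show that any $L_K(E)$-submodule $N \supsetneq L_K(E) f(c)$ of $L_K(E) v$ must equal $L_K(E) v$. Given $\xi \in N$ whose image in $S^f_c$ is nonzero, the goal is to produce an element of the form $g(c) v \in N$ with $g \in K[x]$ not divisible by $f$, by left-multiplying $\xi$ with suitable elements $q^*$ (where $q$ is a path starting at $v$) so as to cancel non-cycle contributions modulo $L_K(E) f(c)$ and land in the $K$-span of $\{c^i v : i \ge 0\}$. Once such a $g(c) v$ lies in $N$, irreducibility of $f$ gives $\gcd(g, f) = 1$ in $K[x]$, so Bezout's identity yields $a, b \in K[x]$ with $a g + b f = 1$; hence $v = a(c) g(c) + b(c) f(c) \in N$, forcing $N = L_K(E) v$. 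The main technical obstacle is precisely this reduction step, which requires a careful analysis of how the standard spanning set $\{p q^* \colon r(p) = r(q) = v\}$ of $L_K(E) v$ collapses modulo $L_K(E) f(c)$, using that $f$ is a polynomial in $c$ alone (involving no $c^*$).

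For the endomorphism ring, I would construct $\Psi \colon K[x]/(f(x)) \to \End_{L_K(E)}(S^f_c)$ by $\Psi([h])(\xi z) := \xi h(c) z$. Well-definedness follows from $f(c) z = 0$ together with the commutativity $f(c) h(c) = h(c) f(c)$, and $\Psi$ is clearly a ring homomorphism factoring through $K[x]/(f)$. Injectivity is automatic since $K[x]/(f)$ is a field. For surjectivity, any $\phi \in \End_{L_K(E)}(S^f_c)$ is determined by $\phi(z)$, which must be annihilated by $f(c)$; using the normal form derived in the simplicity argument, one shows $\{y \in S^f_c \colon f(c) y = 0\}$ equals the $K$-span of $\{z, c z, \ldots, c^{n-1} z\}$, which is precisely the image of $\Psi$.

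Finally, for part (2), the backward direction is explicit: if $f = g$ and $c^\infty \sim d^\infty$, then $d$ is obtained from $c$ by a cyclic shift, and one constructs an $L_K(E)$-linear isomorphism by sending $z$ to an appropriate element of $S^g_d$ satisfying the same defining relation. The forward direction starts from the observation that any isomorphism $\phi \colon S^f_c \to S^g_d$ must send $z$ to an element $\phi(z)$ with $\ann_{L_K(E)}(\phi(z)) = L_K(E) f(c)$; analyzing this annihilator condition lets one recover the normalized polynomial $f$ (from the minimal polynomial of a cycle's action on a distinguished generator) and the tail-equivalence class $[c^\infty]$ (from the infinite-shift structure encoded by iterated $e^*$-actions on $z$).
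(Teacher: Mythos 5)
Your outline reproduces the right skeleton (presentation $S^f_c\cong L_K(E)v/L_K(E)f(c)$, simplicity by reducing a nonzero element to an invertible element of $K[c]/K[c]f(c)$ acting on the generator, endomorphisms through the $K[c]$-action, part (2) via tail behavior), but the step that actually makes all of this work is never supplied: you state the reduction ``produce $g(c)v\in N$ by left-multiplying with suitable $q^*$'' and explicitly defer it as ``the main technical obstacle.'' That obstacle is the whole proof. The paper's engine is the pumping identity $z=c^Nf_1(c)^Nz$ (with $f_1(c)=a_1v+a_2c+\cdots+a_nc^{n-1}$), used twice: first to absorb the ghost parts, turning an arbitrary nonzero $y=\sum_i k_i\mu_i\nu_i^*z$ into $\sum_i k_i\alpha_i f_1(c)^Nz$ with the $\alpha_i$ genuine paths (since $\nu_i^*c^N\ne 0$ forces $c^N=\nu_i\delta_i$), and second to push every term far enough along its tail $\alpha_ic^\infty$ that a single left multiplication by $\tau_{\le t}(\alpha_1c^\infty)^*$ isolates one summand $p_1(c)f_1(c)^lz$ with $p_1(c)f_1(c)^l$ a unit of the field $K[c]/K[c]f(c)$. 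Without this (or an equivalent device), multiplying by one $q^*$ does not visibly land you in $K[c]v$ modulo $L_K(E)f(c)$, because the spanning elements $pq^*$ with nontrivial $q^*$ cannot be handled term by term. You also omit the (easy but necessary) verification that $v\notin L_K(E)f(c)$, via the $\mathbb{Z}$-grading, without which ``simple'' is vacuous.

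Two further steps rest on claims that are false as stated or unproven. First, $\ann_{L_K(E)}(\phi(z))=L_K(E)f(c)$ is not correct: the annihilator of the generator is $\{r\in L_K(E): rv\in L_K(E)f(c)\}$, which strictly contains $L_K(E)f(c)$ (it contains every vertex $u\ne v$, for example), so ``recovering $f$ and $[c^\infty]$ from the annihilator'' needs a genuinely different formulation; the paper instead writes $\phi(z)$ in normal form in $S^g_d$, shows $c^\infty\sim d^\infty$ by noting that otherwise $(c^*)^t\phi(z)=0$ while $(c^*)^tz=f_1(c)^tz\ne0$ (this last because $c^tf_1(c)^tz=z$), and gets $f=g$ from $f(c)\in K[c]g(c)$ plus irreducibility after re-presenting $S^g_d$ on a generator $z''$ killed by $g(c)$. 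Second, your surjectivity argument for $\End_{L_K(E)}(S^f_c)$ invokes ``the $f(c)$-torsion equals $\mathrm{span}_K\{z,cz,\ldots,c^{n-1}z\}$,'' which presupposes linear independence (uniqueness) of the normal form $\sum_i\beta_ip_i(c)z$ with distinct tails $\beta_ic^\infty$; the existence of the normal form, which is all the reduction gives you, does not yield this, and the paper deliberately avoids needing it by exploiting injectivity of $\phi$ together with the same identities $c^df_1(c)^dz=z$ and $(c^*)^dz=f_1(c)^dz$. So as written the proposal has a genuine gap at the central reduction, and two supporting claims that would need to be repaired or replaced before the argument closes.
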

\begin{proof}
(1) We note that $vz = z$ and $z = c^nf_1(c)^nz$ for all $n\ge 1$, where $f_1(c) = a_1v + a_2c + \cdots + a_n c^{n-1}$. By the $\mathbb{Z}$-grading on $L_K(E)$,  $L_K(E)v/L_K(E)f(c)\neq 0$. Since $f(c) (v + L_K(E)f(c)) = f(c) + L_K(E)f(c) =0$ in $L_K(E)v/L_K(E)f(c)$, there exists a surjective $L_K(E)$-homomorphism $\theta: S^f_c\longrightarrow L_K(E)v/L_K(E)f(c)$ such that $\theta(z) = v + L_K(E)f(c)$, and so $S^f_c\neq 0$. 

We claim that $S^f_c$ is a simple left $L_K(E)$-module. Indeed, let $y$ be a nonzero element in $S^f_c$. Since $S^f_c = L_K(E)z$, $y$ may be written in  the form $y = rz$ and $0\neq r=\sm k_i\m_i\n_i^*\in L_K(E)$, where $m$ is minimal such that $k_i \in K\setminus \{0\}$ and $\m_i, \n_i \in E^*$ with $r(\m_i) = r(\n_i)$ for all $1\le i\le m$. Let $n$ be a positive integer such that $|\n_i\le n|c|$ for all $1\le i\le m$. We then have

$$y=(\sm k_i\m_i\n^*_i)z=(\sm k_i\m_i\n^*_i)c^nf_1(c)^nz=(\sm k_i\m_i\n^*_ic^nf_1(c)^n)z.$$
By the minimality of $m$, $\n^*_ic^n\neq 0$ for all $1\le i\le m$. Then, for each $i$, there exists $\d_i\in E^*$ such that $c^n = \n_i\d_i$ and $r(\d_i) = v:= r(c)= s(c)$. This implies

$$y=(\sm k_i\m_i\n^*_ic^nf_1(c)^n)z=(\sm k_i\m_i\d_if_1(c)^n)z=\sm k_i\a_if_1(c)^nz$$
where $\a_i=\m_i\d_i \,(i=1, \cdots , m)$. We note that $\alpha_i c^{\infty} = \alpha_j c^{\infty}$ in $E^{\infty}$ if and only if $\alpha_i = \alpha_j c^{n_i}$ for some $n_i\in \mathbb{Z}^+$, or $\alpha_j = \alpha_i c^{n_j}$ for some $n_j\in \mathbb{Z}^+$, and $f(c) z= 0$ in $S^f_c$. Consequently, $y$ may be written in the form 
$y = \sum^d_{i=1}\beta_i p_i(c)z$, where $p_i(c)$'s are nonzero elements in $K[c]/K[c]f(c)$ and $\beta_i$'s are paths in $E^*$ such that $\beta_ic^{\infty}$'s are distinct infinite paths in $E^{\infty}$, and so there exists a positive integer $t$ such that $\tau_{\le t}(\beta_ic^{\infty})$'s are distinct paths in $E^*$. This implies that $\tau_{\le t + j}(\beta_ic^{\infty})$'s are distinct paths in $E^*$ for all $j \ge 0$. Therefore,  without loss of generality, we may assume that $\tau_{\le t}(\beta_1c^{\infty}) = \beta_1 c^{l}$ for some $l \ge 1$. We then have $\tau_{\le t}(\beta_1c^{\infty})^*y = \tau_{\le t}(\beta_1c^{\infty})^*(\sum^d_{i=1}\beta_i p_i(c)z) = \tau_{\le t}(\beta_1c^{\infty})^*(\sum^d_{i=1}\beta_ic^l p_i(c)f_1(c)^lz) = p_1(c)f_1(c)^lz$. Since $p_1(c)f_1(c)^l$ is a nonzero element in $K[c]/K[c]f(c)$ and $K[c]/K[c]f(c)\cong K[x]/K[x]f(x)$ is a field, there exist $q$ and $h\in K[c]$ such that $q p_1(c)f_1(c)^l = v + hf(c)$, and so $$q \tau_{\le t}(\beta_1c^{\infty})^*y = q p_1(c)f_1(c)^lz = v z + hf(c) z = z.$$
This implies $z \in L_K(E) y$, and hence $S^f_c = L_K(E)z = L_K(E)y$.
Consequently, $S^f_c$ is a simple left $L_K(E)$-module, showing the claim. This implies that $\theta$ is an isomorphism.

Let $\phi: S^f_c\longrightarrow S^f_c$ be a nonzero $L_K(E)$-homomorphism. By the same approach as above, $\phi(z)$ may be written in the form $0\neq \phi(z) = \sm \beta_ip_i(c)z$, where
$m\ge 1$, $p_i(c)$'s are nonzero elements in $K[c]/K[c]f(c)$, and $\beta_i$'s are paths in $E^*$ such that $\beta_ic^{\infty}$'s are distinct infinite paths in $E^{\infty}$. If $c^{\infty} \neq \beta_ic^{\infty}$ in $E^{\infty}$ for all $1 \le i \le m$, then there exists a positive integer $d$ such that $\tau_{\le d}(c^{\infty})$ and  $\tau_{\le d}(\beta_ic^{\infty})$'s are distinct paths in $E^*$, and so $(c^*)^d \beta_i c^d = 0$ for all $i$. This implies
\[\phi((c^*)^d z) = (c^*)^d\phi(z) =  (c^*)^d(\sum^m_{i=1}\beta_ip_i(c)z)= (c^*)^d(\sum^m_{i=1}\beta_ic^dp_i(c)f_1(c)^dz)= 0.\] On the other hand, 
we have that $\phi$ is an automorphism (since $\phi$ is nonzero and $S^f_c$ is simple) and $(c^*)^dz = f_1(c)^dz \neq 0$, and so 
$\phi((c^*)^dz) \neq 0$, a contradiction. This implies that there exists a $i$ such that $\beta_i c^{\infty}= c^{\infty}$  in $E^{\infty}$. Without loss of generality, we may assume that $\beta_1 c^{\infty}= c^{\infty}$. We then have that  $\beta_1 = c^t$ 
for some $t\ge 0$, and
$\tau_{\le d}(c^{\infty}) = \tau_{\le d}(\beta_1c^{\infty})$ and  $\tau_{\le d}(\beta_ic^{\infty})$ ($i = 2, \ldots, m$) are distinct paths in $E^*$, so $\tau_{\le d + l}(c^{\infty}) = \tau_{\le d +l}(\beta_1c^{\infty})$ and  $\tau_{\le d +l}(\beta_ic^{\infty})$ ($i = 2, \ldots, m$) are distinct paths in $E^*$ for all $l\ge 0$. Therefore, without loss of generality, we may assume that 
$\tau_{\le d}(c^{\infty}) = \tau_{\le d}(\beta_1c^{\infty}) = c^l$ for some $l \ge t$, and so
$\phi((c^*)^l z) = (c^*)^l\phi(z) =  (c^*)^l(c^tp_1(c)z + \sum^m_{i=2}\beta_ip_i(c)z)= (c^*)^l(c^lp_1(c)f_1(c)^{l-t}z + \sum^m_{i=2}\beta_ic^lp_i(c)f_1(c)^lz)= p_1(c)f_1(c)^{l-t}z + \sum^m_{i=2}(c^*)^l\beta_ic^lp_i(c)f_1(c)^lz = p_1(c)f_1(c)^{l-t}z$. This implies  $$\phi(z) = \phi(c^l (c^*)^lz) = c^l \phi((c^*)^l z) = c^l p_1(c)f_1(c)^{l-t}z,$$ so $\phi(z)$ can be written in the form $\phi(z) = p(c)z$, where $p(c)\in K[c]/K[c]f(c)\cong K[x]/K[x]f(x)$. Conversely, let $p(c)$ be a nonzero element in $K[c]/K[c]f(c)$. We then have $p(c)z \neq 0$ in $S^f_c$ (since $vz =z\neq 0$ and $p(c)$ is a unit in $K[c]/K[c]f(c)$) and $f(c)(p(c)z) = (f(c)p(c))z = (p(c)f(c))z= p(c)(f(c)z) = 0$, and so there exists a nonzero $L_K(E)$-homomorphism $\pi: S^f_c\longrightarrow S^f_c$ such that $\pi(z) = p(c)z$. Therefore, we have $\End_{L_K(E)}(S^f_c)\cong K[c]/K[c]f(c)\cong K[x]/K[x]f(x)$.

(2) Write $g(x) = 1 - b_1x - \cdots - b_m x^m \in K[x]$ and $c = e_1 \cdots e_t$.
Assume that $S^g_d$ is the left $L_K(E)$-module generated by $z'$ subject to $z'= (b_1d + \cdots + b_md^m)z'= d g_1(d)z'$.

($\Longrightarrow$) Assume that $\phi: S^f_c\longrightarrow S^g_d$ is a $L_K(E)$-isomorphism. Then, by the same approach as above, $\phi(z) = \sum^s_{i=1} \alpha_i a_iz'$, where $a_i$'s are nonzero element in $K[d]/K[d]g(d)$ and $\alpha_i$'s are paths in $E^*$ such that $\alpha_i d^{\infty}$'s are distinct infinite paths in $E^{\infty}$. If $c^{\infty}\neq \alpha_i d^{\infty}$ in $E^{\infty}$
for all $1\le i\le s$, then there exist a positive integer $t$ such that $(c^*)^t \alpha_i d^t = 0$ for all $1\le i\le s$. Then, since $z'=  d g_1(d)z'$, $\phi(z) = \sum^s_{i=1} \alpha_i a_iz'$ $ = \sum^s_{i=1} \alpha_i a_i d^t g_1(d)^t z'$ $ = \sum^s_{i=1} \alpha_i  d^t a_i g_1(d)^t z'$, and so $$\phi((c^*)^tz)=(c^*)^t \phi(z) = \sum^s_{i=1} (c^*)^t\alpha_i d^t a_i g_1(d)^t z'=0.$$
On the other hand, we note that $(c^*)^tz = f_1(c)^t z \neq 0$ in $S^f_c$, so $\phi((c^*)^tz)\neq 0$, since $\phi$ is a $L_K(E)$-isomorphism, a contradiction. This implies $c^{\infty}= \alpha_i d^{\infty}$ for some $1 \le i\le s$, so  $c^{\infty} \sim d^{\infty}$. Since $c$ and $d$ are simple closed paths in $E$, we must have $d = c_j := e_j\cdots e_te_1 \cdots e_{j-1}$ for some $1\le j\le t$.

Let $z'' := e_1 \cdots e_{j-1} z'$ if $j\neq 1$ and  $z'' := z'$ if $j=1$. Since $z' = (e_1 \cdots e_{j-1})^* z'' \neq 0$, $z''\neq 0$ in $S^g_d= S^g_{c_j}$. We then have $e_j \cdots e_t g(c)z'' = c_j g(c_j)z' = dg(d)z' =0$ in $S^g_d$, and so $g(c)z'' = (e_j \cdots e_t)^*e_j \cdots e_t g(c)z'' =0$ in $S^g_d$. By item (1), $S^g_d$ can be also generated by $z''$ subject to $g(c)z'' =0$.

By repeating approach described in the proof of item (1), we obtain that $\phi(z) = p(c)z''$, where $p(c)$ is a nonzero element of $K[c]/K[c]g(c)$.
We then have $0 = \phi(0) = \phi(f(c)z)= f(c)\phi(z)= f(c)(p(c)z'') = (p(c)f(c))z''$, and so $p(c)f(c)=0$ in $K[c]/K[c]g(c)$, by item (1). Since $p(c)$ is a unit in $K[c]/K[c]g(c)$, $f(c)\in K[c]g(c)$. Then, since $f$ is an irreducible polynomial in $K[x]$, we must have $f = g$. 

($\Longleftarrow$) Assume that $f = g$ and $c^{\infty} \sim d^{\infty}$. Since $c$ and $d$ are simple closed paths, $d = c_j := e_j \cdots e_te_1\cdots e_{j-1}$ for some $1\le j\le t$. Then, by repeating method described as in the direction ($\Longrightarrow$), we obtain that $S^f_c \cong S^f_d$, thus finishing the proof.
\end{proof}

We should mention the following useful remark.
 
\begin{rem} Let $K$ be a field, $E$ a graph and $c= e_1\cdots e_t$ a simple closed path in $E$ based at $v$, and let $f(x)\in \text{Irr}(K[x])$.

(1) We denote by $\Pi_c$ the set of all the following closed paths $c_1 := c,\, c_2:= e_2\cdots e_t e_1,\, \ldots, \, c_n := e_n e_1\cdots e_{n-1}$. By Theorem \ref{Rangmod}, all modules $S^f_{c_i}$ are isomorphic to each other, and for a simple closed path $d$, $S^f_{d} \cong S^f_{c}$ if and only if $d\in \Pi_c$. Consequently, if one can represent their isomorphism class by a simple module $S^f_{\Pi_c}$ isomorphic to some $S^f_{c_i}$, then $S^f_{\Pi_c}$ is well-defined and depends only on the $(f, \Pi_c)$.
	 
(2) If $f(x) = 1 - x\in K[x]$, then by Theorem~\ref{Rangmod} (1) and \cite[Theorem 2.8]{amt:eosmolpa}, we have
$S^f_c \cong L_K(E)v/L_K(E)(c -v)\cong V_{[c^{\infty}]}$ as left $L_K(E)$-modules.

(3) It was shown in the proof of Theorem \ref{Rangmod} that every element $y$ of $S^f_c$ may be written in the form $y = \sum^n_{i=1}\a_i p_i(c)z$, where $\a_i$'s are paths in $E$ such that $\alpha_ic^{\infty}$'s are distinct infinite paths in $E^{\infty}$ and $p_i(c)$'s are nonzero elements of $K[c]/K[c]f(c)$.	
\end{rem}



We next construct new classes of simple modules for the Leavitt path algebra $L_K(R_n)$ by using Theorem \ref{Rangmod}, Corollary \ref{Anicktype3} and special closed paths in $R_n$.

\begin{nota}
For any integer $n\ge 2$, we denote by $C_s(R_n)$ the set of simple closed paths of the form $c = e_{k_1}e_{k_2} \cdots e_{k_m}$, where $k_i\in \{1, 3, \ldots, n\}$ for all $1\le i\le m-1$ and $k_m =2$, in $R_n$.
\end{nota}

Let $c= e_{k_1}e_{k_2}\cdots e_{k_m}\in C_s(R_n)$, $p\in A_{R_n}(e_1, e_2)$, and $f = 1 - a_1x_1 - \cdots - a_n x^n = 1 - xf_1(x)\in \text{Irr}(K[x])$. We have a left $L_K(R_n)$-modle $S^{f,\, p}_{c}$,  which is the twisted module $(S^f_{c})^{\sigma_p}$, where $\sigma_p$  is the automorphism of $L_K(R_n)$ defined in Corollary~\ref{Anicktype3}.
Denoting by $\ast$ the module operation in $S^{f,\, p}_{c}$, we have the following useful fact.

\begin{lem}\label{Scalarmult}
Let $K$ be a field, $n\ge 2$ a positive integer, and $R_n$ the rose with $n$ petals. Let $p\in A_{R_n}(e_1, e_2)$ be an arbitrary element, $c\in C_s(R_n)$, and $f\in \rm{Irr}$$(K[x])$. Then the following statements hold:	
	
$(1)$ $c \ast y = cy + e_{k_1}\cdots e_{k_{m -1}}e_1py$ for all $y\in S^{f,\, p}_{c}$;
	
$(2)$ $(c^*)^m \ast z =  (c^*)^mz$ for all $m\ge 1$, where $z$ is a generator of the left $L_K(E)$-module $S^{f}_{c}$ which is described in Theorem \ref{Rangmod}.
\end{lem}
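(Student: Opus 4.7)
For part (1), since $c=e_{k_1}\cdots e_{k_{m-1}}e_{k_m}$ with $k_1,\ldots,k_{m-1}\in\{1,3,\ldots,n\}$ and $k_m=2$, the edges $e_{k_1},\ldots,e_{k_{m-1}}$ all lie in $A_{R_n}(e_1,e_2)$ and hence are fixed by $\sigma_p$. Combining this with $\sigma_p(e_2)=e_2+e_1p$ from Corollary~\ref{Anicktype3} gives
\[
\sigma_p(c)=e_{k_1}\cdots e_{k_{m-1}}(e_2+e_1p)=c+e_{k_1}\cdots e_{k_{m-1}}e_1p.
\]
Since $r\ast y=\sigma_p(r)y$ by the definition of the twisted module, applying this to $y$ yields~(1).

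For part (2), to avoid a letter clash let me write the exponent as $N$, so the claim becomes $(c^*)^N\ast z=(c^*)^Nz$ for all $N\ge 1$. The plan is to reduce everything to the following submodule lemma: for every polynomial $q(x)\in K[x]$,
\[
\sigma_p(c^*)\cdot(q(c)z)=c^*\cdot(q(c)z). \qquad (\dagger)
\]
Granting $(\dagger)$, induction on $N$ is immediate. Using $c^*z=f_1(c)z$ and $c^*c^jz=c^{j-1}z$ for $j\ge 1$, the subspace $K[c]z$ is stable under $c^*$, so $(c^*)^{N-1}z=q_{N-1}(c)z$ for some $q_{N-1}\in K[x]$; hence
\[
(c^*)^N\ast z=c^*\ast((c^*)^{N-1}\ast z)=\sigma_p(c^*)(q_{N-1}(c)z)=c^*(q_{N-1}(c)z)=(c^*)^Nz
\]
by $(\dagger)$.

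To prove $(\dagger)$, I would expand $\sigma_p(c^*)-c^*$. Writing $\sigma_p(e_{k_j}^*)=e_{k_j}^*+h_j$ with $h_j=-pe_2^*$ when $k_j=1$ and $h_j=0$ otherwise, every term in the expansion of $\sigma_p(c^*)$ other than $c^*$ itself has the form $X\cdot h_{j_0}\cdot e_{k_{j_0-1}}^*\cdots e_{k_1}^*$, where $j_0$ is the smallest index carrying an $h$-factor and $X$ collects the remaining factors at positions above $j_0$. Using $c^j=e_{k_1}\cdots e_{k_m}c^{j-1}$ for $j\ge 1$ together with $z=cf_1(c)z$, one checks that $e_{k_{j_0-1}}^*\cdots e_{k_1}^*(q(c)z)=e_{k_{j_0}}\cdots e_{k_m}\cdot w$ for some $w\in K[c]z$. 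Since $k_{j_0}=1$, this output starts with $e_1$, so applying $h_{j_0}=-pe_2^*$ produces a factor $e_2^*e_1=0$ and the whole term vanishes. In the degenerate case $j_0=1$ there is no tail, and one checks directly that $e_2^*q(c)z=0$: every summand $c^jz$ starts with $e_{k_1}\ne e_2$, using $z=cf_1(c)z$ for the $j=0$ summand.

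The main obstacle is the combinatorial bookkeeping in $(\dagger)$: one must locate the rightmost $h$-factor in each expanded term and verify that the partial reduction of $q(c)z$ by the star-letters immediately to its right always exposes an $e_1$ exactly at the position where the spurious $e_2^*$ has been inserted. The reason this collapses cleanly is structural: the Anick substitution $e_1^*\mapsto e_1^*-pe_2^*$ places $e_2^*$ at precisely the old location of $e_1^*$, and the defining shape of $c\in C_s(R_n)$ (namely $k_m=2$ and $k_j\in\{1,3,\ldots,n\}$ for $j<m$) guarantees that the corresponding dual position in the reduced form of $q(c)z$ is always occupied by $e_1$.
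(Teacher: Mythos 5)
Your proposal is correct, and part (1) coincides with the paper's argument. For part (2) you take a genuinely different (though closely parallel) route. The paper isolates the algebra identity $\sigma_p(c^*)\,c=1$ in $L_K(R_n)$, proved by induction on the number of indices $i\le m-1$ with $e_{k_i}=e_1$ (both the base case and the inductive step reduce to $e_2^*e_1=0$); this immediately gives $\sigma_p((c^*)^N)c^N=1$, and then $(c^*)^N\ast z=\sigma_p((c^*)^N)c^Nf_1(c)^Nz=f_1(c)^Nz=(c^*)^Nz$ using $z=c^Nf_1(c)^Nz$. You instead prove the module-level statement $(\dagger)$ that $\sigma_p(c^*)$ and $c^*$ agree on $K[c]z$, by expanding $\sigma_p(c^*)$ as a product of factors $e_{k_j}^*+h_j$, locating the rightmost correction factor $h_{j_0}=-pe_2^*$, and using $q(c)z=e_{k_1}\cdots e_{k_m}w$ with $w\in K[c]z$ (from $z=cf_1(c)z$), so that stripping by $e_{k_{j_0-1}}^*\cdots e_{k_1}^*$ exposes $e_{k_{j_0}}=e_1$ and the term dies by $e_2^*e_1=0$; the statement for powers then follows by induction on the exponent because $K[c]z$ is stable under $c^*$ (here your implicit observation that $j_0\le m-1$, forced by $k_m=2$, is what guarantees the exposed word is nonempty). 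The underlying cancellation is identical in both arguments; the paper's version buys a clean, module-free identity in $L_K(R_n)$ that passes to powers with no further work, while yours avoids the induction on the number of $e_1$-factors and operates directly on the subspace where the lemma is actually applied. Either argument fully establishes the lemma.
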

\begin{proof}
(1) By Corollary \ref{Anicktype3}, we have $\sigma_p(c)= c + e_{k_1}\cdots e_{k_{m -1}}e_1p$, so $c \ast y = \sigma_p(c)y = cy + e_{k_1}\cdots e_{k_{m -1}}e_1py$ for all $y\in S^{f,\, p}_{c}$, as desired.
	
(2) If $e_{k_i}\neq e_1$ for all $1\le i\le m-1$, then 	$\sigma_p(c^*)= c^*$, and so $c^* \ast z =  c^*z$, as desired. Consider the case that $e_{k_i} = e_1$ for some $1\le i \le m-1$. Let $\ell$ be the number of all elements $1\le i \le m-1$ such that $e_{k_i} = e_1$. We use induction on $\ell$ to establish the claim that $\sigma_p(c^*)c =1$ in $L_K(R_n)$. If $\ell =1$, there is a unique element $1\le i \le m-1$ such that $e_{k_i} = e_1$. We then have $\sigma_p(c^*) = e^*_{k_m}\cdots e^*_{k_{i+1}} (e^*_1-pe^*_2) e^*_{k_{i-1}}\cdots e^*_{k_1}= c^* - e^*_{k_m}\cdots e^*_{k_{i+1}} pe^*_2 e^*_{k_{i-1}}\cdots e^*_{k_1}$, and so $\sigma_p(c^*) c = (c^* - e^*_{k_m}\cdots e^*_{k_{i+1}} pe^*_2 e^*_{k_{i-1}}\cdots e^*_{k_1})c = 1$, since $e^*_2e_{k_i} = e^*_2e_1 = 0$, as desired. Now we proceed inductively. For $\ell > 1$, let $j:= \min\{i\mid 1\le i\le m-1 \text{ and } e_{k_i} = e_1\}$. We have $\sigma_p(c^*)= \sigma_p(e^*_{k_m}\cdots e^*_{k_{j+1}})(e^*_1 - pe^*_2)e^*_{k_{j-1}}\cdots e^*_{k_1}$.  It is clear that $e_{k_{j+1}}\cdots e_{k_m}\in C_s(R_n)$. Then, by the induction hypothesis, we obtain that $\sigma_p(e^*_{k_m}\cdots e^*_{k_{j+1}})e_{k_{j+1}}\cdots e_{k_m} = 1$. This implies that $\sigma_p(c^*) c= \sigma_p(e^*_{k_m}\cdots e^*_{k_{j+1}})(e^*_1 - pe^*_2)e^*_{k_{j-1}}\cdots e^*_{k_1}c = \sigma_p(e^*_{k_m}\cdots e^*_{k_{j+1}})e_{k_{j+1}}\cdots e_{k_m} = 1$, since $e^*_2e_{k_j} = e^*_2e_1 = 0$, thus showing the claim.
By induction we get that $\sigma_p((c^*)^m) c^m = \sigma_p(c^*)^m c^m = 1$ in $L_K(R_n)$ for all $m\ge 1$.

By Theorem \ref{Rangmod}, $z = cf_1(c)z$, so $z = c^mf_1(c)^mz$ and $(c^*)^mz = f_1(c)^mz$ for all $m\ge 1$. We then have $(c^*)^m \ast z= \sigma_p((c^*)^m)z = \sigma_p((c^*)^m)(c^mf_1(c)^mz) = (\sigma_p((c^*)^m) c^m)f_1(c)^mz=f_1(c)^mz = (c^*)^mz$ for all $m\ge 1$, thus finishing the proof.
\end{proof}

We are now in position to provide the first main result of this section.

\begin{thm}\label{Irrrep1}
Let $K$ be a field, $n\ge 2$ a positive integer, and $R_n$ the rose with $n$ petals. Let $p$ and $q\in A_{R_n}(e_1, e_2)$ be two arbitrary elements and $c, d\in C_s(R_n)$, and $f, g\in \rm{Irr}$$(K[x])$. Then the following holds:

$(1)$	$S^{f,\, p}_{c}$ is a simple left $L_K(R_n)$-module;
	
$(2)$ $S^{f,\, p}_{c} \cong S^{g,\, q}_{d}$ as left $L_K(R_n)$-modules if and only if $f = g$, $c=d$, and $p-q = rf(c)$ for some $r\in L_K(R_n)$;
	
$(3)$ For any simple closed path $\alpha$ in $R_n$, $S^{f,\, p}_{c} \cong S^f_{\Pi_{\alpha}}$ as left $L_K(R_n)$-modules if and only if $\alpha \in \Pi_c$ and $p = rf(c)$ for some $r\in L_K(R_n)$;
	
$(4)$ $\rm{End}$$_{L_K(R_n)}(S^{f,\, p}_{c}) \cong K[x]/K[x]f(x)$;

$(5)$ $S^{f,\, p}_{c} \cong L_K(R_n)/L_K(R_n)f(\sigma_p^{-1}(c))$.
\end{thm}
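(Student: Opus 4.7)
The plan is to deduce statement (5) from Theorem~\ref{Rangmod}(1) by a general twisted-module argument, exploiting the fact that $S^{f,\,p}_c$ is, by definition, the underlying abelian group of $S^f_c$ equipped with the new action $a\ast y:=\sigma_p(a)\cdot y$. First I would invoke Theorem~\ref{Rangmod}(1) and note that, since $R_n^0=\{v\}$ with $v=1_{L_K(R_n)}$, we have the presentation
\[
S^f_c\;\cong\;L_K(R_n)\big/L_K(R_n)f(c)
\]
as left $L_K(R_n)$-modules, via $z\mapsto 1+L_K(R_n)f(c)$; in particular the annihilator of $z$ in $S^f_c$ is exactly $L_K(R_n)f(c)$.

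Next I would verify that $z$, regarded as an element of the twisted module $S^{f,\,p}_c=(S^f_c)^{\sigma_p}$, is still a cyclic generator. This is immediate: for any $y\in S^f_c$, write $y=a\cdot z$ with $a\in L_K(R_n)$; then $y=\sigma_p\bigl(\sigma_p^{-1}(a)\bigr)\cdot z=\sigma_p^{-1}(a)\ast z$. I would then compute the annihilator of $z$ with respect to the new action:
\[
\{a\in L_K(R_n)\mid a\ast z=0\}
=\{a\mid \sigma_p(a)\cdot z=0\}
=\sigma_p^{-1}\bigl(L_K(R_n)f(c)\bigr).
\]
Because $\sigma_p$ is a $K$-algebra automorphism of $L_K(R_n)$, it maps left ideals to left ideals and commutes with the polynomial expression $f(\cdot)=1-a_1(\cdot)-\cdots-a_n(\cdot)^n$, so
\[
\sigma_p^{-1}\bigl(L_K(R_n)f(c)\bigr)
=L_K(R_n)\,\sigma_p^{-1}(f(c))
=L_K(R_n)\,f(\sigma_p^{-1}(c)).
\]
The claimed isomorphism $S^{f,\,p}_c\cong L_K(R_n)/L_K(R_n)f(\sigma_p^{-1}(c))$ then follows from the standard presentation of a cyclic module by its annihilator, via the map $a+L_K(R_n)f(\sigma_p^{-1}(c))\mapsto a\ast z=\sigma_p(a)\cdot z$.

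I do not anticipate a serious obstacle here; the statement is essentially a formality once one accepts the general principle that twisting a cyclic module $R/I$ by an automorphism $\sigma$ yields $R/\sigma^{-1}(I)$. The only mildly delicate point is to be sure that the annihilator of the generator $z$ in $S^f_c$ is really the whole left ideal $L_K(R_n)f(c)$ and not merely contained in it, but this is guaranteed by the explicit isomorphism in Theorem~\ref{Rangmod}(1). After that, everything reduces to the algebraic identity $\sigma_p^{-1}(f(c))=f(\sigma_p^{-1}(c))$, which is just the fact that $\sigma_p^{-1}$ is a $K$-algebra homomorphism fixing $v=1$.
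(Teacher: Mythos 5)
Your argument for part (5) is correct and is essentially the paper's own proof of that part: you check that $z$ still generates the twisted module $S^{f,\,p}_c=(S^f_c)^{\sigma_p}$, compute $\{a\mid a\ast z=0\}=\sigma_p^{-1}\bigl(L_K(R_n)f(c)\bigr)=L_K(R_n)f(\sigma_p^{-1}(c))$ using the presentation of $S^f_c$ from Theorem~\ref{Rangmod}(1) and the fact that $\sigma_p^{-1}$ is a $K$-algebra automorphism fixing $v=1_{L_K(R_n)}$, and conclude by the standard isomorphism for a cyclic module. That portion is sound.

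The genuine gap is that the statement consists of five assertions and your proposal addresses only (5). Nothing is said about (1) (simplicity of $S^{f,\,p}_c$ -- easy, since twisting a simple module by an automorphism preserves simplicity, but it still needs to be said), about (4) (the endomorphism ring, which requires an argument parallel to the endomorphism computation in Theorem~\ref{Rangmod}(1)), about (3), or -- most importantly -- about (2), which is the substantial content of the theorem and the reason Lemma~\ref{Scalarmult} exists. For (2) one must analyze an arbitrary isomorphism $S^{f,\,p}_c\to S^{g,\,q}_d$: using $(c^*)^m\ast z=(c^*)^mz$ and $c\ast y=cy+e_{k_1}\cdots e_{k_{m-1}}e_1py$ one first forces $c^\infty\sim d^\infty$, hence $c=d$ since both lie in $C_s(R_n)$; then comparing $\phi(c\ast((c^*)^{k+1}z))$ computed two ways yields $e_{k_1}\cdots e_{k_{m-1}}e_1(p-q)ag_1(c)z'=0$, from which $p-q=rf(c)$ is extracted via the presentation of $S^g_c$; a separate grading argument gives $f=g$; and the converse direction requires constructing an explicit well-defined isomorphism $\sigma_p(s)z\mapsto\sigma_q(s)z$ using $\sigma_q(\sigma_p^{-1}(f(c)))z=f(c)z$. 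None of this follows from the twisted-cyclic-module principle you invoke for (5), so as a proof of the theorem the proposal leaves four of the five parts, including the hardest one, unproven.
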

\begin{proof}
(1) It follows from that $S^f_{c}$ is a simple left $L_K(R_n)$-module (by Theorem \ref{Rangmod}) and  $\sigma_p$ is an automorphism of $L_K(R_n)$ (by Corollary~\ref{Anicktype3}).
	
(2) Assume that $\phi: S^{f,\, p}_{c} \longrightarrow S^{g,\, q}_{d}$ is a $L_K(R_n)$-isomorphism. Let $z$ and $z'$ be generators of the left $L_K(E)$-modules $S^{f,\, p}_{c}$ and $S^{g,\, q}_{d}$ which are described in Theorem \ref{Rangmod}, respectively. We then have $0\neq \phi(z) = \sum^t_{i=1}\alpha_ia_iz'$ in $S^{g,\, q}_{d}$, where $\alpha_i$'s are nonzero elements of $K[d]/K[d]g(d)$ and $\alpha_i$'s are paths in $(R_n)^*$ such that $\alpha_i d^{\infty}$'s are distinct infinite paths in $(R_n)^{\infty}$. By Theorem \ref{Rangmod} and Lemma \ref{Scalarmult}, we note that $z = c^k f_1(c)^kz$ and  $(c^*)^k\ast z = (c^*)^k z$ in $S^{f,\, p}_{c}$ for all $k\ge 1$, and $z' = d^l g_1(d)^lz'$ and  $(d^*)^l\ast z' = (d^*)^l z'$ in $S^{g,\, q}_{d}$ for all $l\ge 1$. Therefore, by repeating approach described in the proof of the direction $(\Longrightarrow)$ of Theorem~\ref{Rangmod} (2), we obtain that $c^{\infty} \sim  d^{\infty}$ and $\phi((c^*)^kz) = a z'$ for some $k\ge 1$ and a nonzero element $a\in K[d]/K[d]g(d)$. Then, since  $c^{\infty} \sim  d^{\infty}$, we have $d\in \Pi_c$, and so $c = d$, since $c$ and $d\in C_s(R_n)$. We also note that
 $\phi((c^*)^{k+1}\ast z) = \phi(c^*\ast ((c^*)^kz))= c^*\ast \phi((c^*)^kz) = c^*\ast (az') =  c^*(az')= a g_1(c)z'.$
By induction we may prove that $$\phi((c^*)^{k +i}\ast z) = \phi((c^*)^{k+i} z)= a g_1(c)^{i}z'$$ for all $i \ge 0$, where $ g_1(c)^{0}:= 1_{L_K(R_n)}$.

In $S^{f,\, p}_{c}$ we have $z= c^mf_1(c)^mz$ and $(c^*)^m= f_1(c)^mz$ for all $m\ge 1$, and
$c\ast z = c z +  e_{k_1}\cdots e_{k_{m -1}}e_1p z= c z + p'z$, where $p':= e_{k_1}\cdots e_{k_{m -1}}e_1p\in A_{R_n}(e_1, e_2)$,  so 
\begin{align*}
c\ast ((c^*)^{k+1}z) &= c\ast (f_1(c)^{k+1}z) = c (f_1(c)^{k+1}z) + p'(f_1(c)^{k+1}z)=\\&= f_1(c)^{k}z + p'(f_1(c)^{k+1}z)= (c^*)^kz + p'((c^*)^{k+1}z)=\\&=(c^*)^kz + p'\ast ((c^*)^{k+1}z),
\end{align*}
since $\sigma_p(p') = p'$ (by Corollary~\ref{Anicktype3}). This implies
\begin{align*}
\phi(c\ast ((c^*)^{k+1}z))&= \phi((c^*)^kz) + \phi(p'\ast ((c^*)^{k+1}z))= az' + p'ag_1(c)z'\\&=az' + e_{k_1}\cdots e_{k_{m -1}}e_1pag_1(c)z'.
\end{align*}
On the other hand, 
\begin{align*}
\phi(c\ast ((c^*)^{k+1}z))&= c \ast \phi((c^*)^{k+1}z)=c \ast (a g_1(c)z')= \\&= ca g_1(c)z' +  e_{k_1}\cdots e_{k_{m -1}}e_1qag_1(c)z'=\\& =  az' +  e_{k_1}\cdots e_{k_{m -1}}e_1qag_1(c)z',
\end{align*}
since $ca g_1(c)z' = a cg_1(c)z'$ and $z' = cg_1(c)z'$. From these observations, we have $$e_{k_1}\cdots e_{k_{m -1}}e_1pag_1(c)z' = e_{k_1}\cdots e_{k_{m -1}}e_1qag_1(c)z'$$ in $S^{g}_{c}$, showing that $e_{k_1}\cdots e_{k_{m -1}}e_1(p-q)ag_1(c)z' = 0$ in $S^{g}_{c}$, and hence $$(p-q)ag_1(c)z'= (e_{k_1}\cdots e_{k_{m -1}}e_1)^* e_{k_1}\cdots e_{k_{m -1}}e_1(p-q)ag_1(c)z' = 0$$ in $S^{g}_{c}$. By Theorem \ref{Rangmod} (1), we have $S^g_c \cong L_K(R_n)/L_K(R_n)g(c)$, as left $L_K(R_n)$-modules, via the map: $z\longmapsto 1+ L_K(R_n)g(c)$. Therefore, $(p-q)ag_1(c) = bg(c)$ for some $b\in L_K(R_n)$. Since $ag_1(c)$ is a unit of $K[c]/K[c]g(c)$, there exist elements $\alpha,\, \beta\in K[c]$ such that $(ag_1(c))\alpha = 1 + \beta g(c)$, and so $$bg(c)\alpha =(p-q)ag_1(c)\alpha = (p-q)(1+ \beta g(c))= p-q + (p-q)\beta g(c).$$ This implies $$p - q = (b\alpha + q\beta -p\beta)g(c)= rg(c)$$ where $r:= b\alpha + q\beta -p\beta \in L_K(R_n)$. 
	
Write $f(x) = 1 - a_1 x - \cdots - a_sx^s$. We then have $(1 - a_1 c - \cdots - a_s c^s) z = 0$ and $((c^*)^{k +s} - a_1(c^*)^{k +s-1} - \cdots - a_s(c^*)^{k})z=
(c^*)^{k +s}(1 - r_1 c - \cdots - a_sc^s) z = 0$ in $S^{f,\, p}_{c}$, and so
\begin{align*}
ag_1(c)^{s} f(c) z'&= ag_1(c)^{s}(1 - a_1 c - \cdots - a_sc^s) z' = \\&= ag_1(c)^{s} z' - a_1ag_1(c)^{s-1} z' - \cdots - a_s z' =\\&  = \phi(((c^*)^{k +s} - a_1(c^*)^{k +s-1} - \cdots - a_s(c^*)^{k})z)= \phi(0)=0
\end{align*}
in $S^{g}_{c}$. By repeating the same argument described above, we obtain that $f(c) = rg(c)$ for some $\gamma\in L_K(R_n)$. Write $\gamma = \sum^d_{i=1}k_i\alpha_i\beta^*_i$, where $k_i\in K\setminus \{0\}$ and $\alpha_i$, $\beta_i$ are paths in $R_n$. Let $m = \max\{|\alpha_i|, \, |\beta_i|\mid 1\le i\le d\}$. We then have $(c^*)^m\gamma c^m = \sum^d_{i=1}k_i(c^*)^m\alpha_i\beta^*_ic^m\in K[c]$ and $f(c)= (c^*)^mc^m f(c) = (c^*)^m f(c) c^m = (c^*)^m (\sum^d_{i=1}k_i\alpha_i\beta^*_i) g(c)c^m = (\sum^d_{i=1}k_i(c^*)^m\alpha_i\beta^*_ic^m)g(c)$ in $K[c]$, and so $f = g$, since $f, g \in \text{Irr}(K[x])$.
	
Conversely, assume that $f = g$, $c =d$ and $p- q = r f(c)$ for some $r\in L_K(R_n)$. We use induction to claim that $\sigma_q(\sigma_p^{-1})(c^m))z = c^mz$ for all $m\ge 1$. For $m= 1$, by Corollary \ref{Anicktype3}, $\sigma_p^{-1}(c) = e_{k_1}\cdots e_{k_{m-1}}e_2 - e_{k_1}\cdots e_{k_{m-1}}e_1p$, and so $\sigma_q(\sigma_p^{-1}(c)) = \sigma_q(e_{k_1}\cdots e_{k_{m-1}}e_2 ) - \sigma_q(e_{k_1}\cdots e_{k_{m-1}}e_1p)= c + e_{k_1}\cdots e_{k_{m-1}}e_1q - e_{k_1}\cdots e_{k_{m-1}}e_1p = c + e_{k_1}\cdots e_{k_{m-1}}e_1(q - p) = c - e_{k_1}\cdots e_{k_{m-1}}e_1 rf(c)$. Then,  since $f(c) z= 0$, we have $\sigma_q(\sigma_p^{-1}(c))z = cz$.
For $m > 1$, we have $\sigma_q(\sigma_p^{-1}(c^{m+1}))z = \sigma_q(\sigma_p^{-1}(c) \sigma_p^{-1}(c^{m}))z = \sigma_q(\sigma_p^{-1}(c)) \sigma_q(\sigma_p^{-1}(c^m))z = \sigma_q(\sigma_p^{-1}(c)) c^mz = (c - e_{k_1}\cdots e_{k_{m-1}}e_1 rf(c))(c^mz)= c^{m+1}z -e_{k_1}\cdots e_{k_{m-1}}e_1 rf(c)c^mz = c^{m+1}z -e_{k_1}\cdots e_{k_{m-1}}e_1 rc^mf(c)z = c^{m+1}z$, as desired. This shows that $\sigma_q(\sigma^{-1}_p(f(c)))z = f(c)z$.

We note that since $S^{f,\, p}_{c}$ is a simple left $L_K(R_n)$-module, every element of $S^{f,\, p}_{c}$ may be written in the form: $\sigma_p(s)z$, where $s\in L_K(R_n)$. Define $\phi:S^{f,\, p}_{c} \longrightarrow S^{f,\, q}_{c}$ as follows: $\sigma_p(s)z\longmapsto \sigma_q(s)z$. We claim that $\phi$ is well defined. Indeed, let $s$ and $t$ be two elements in $L_K(R_n)$ such that $\sigma_p(s)z = \sigma_p(t)z$ in $S^{f}_{c}$. By Theorem \ref{Rangmod} (1), $\sigma_p(s - t) = \sigma_p(s) -\sigma_p(t) = b f(c)$ for some $b\in L_K(R_n)$, so $s - t = \sigma^{-1}_p(b f(c))= \sigma^{-1}_p(b)\sigma^{-1}_p(f(c))$. This implies $(\sigma_q(s) -\sigma_q(t))z= \sigma_q(s-t)z= \sigma_q(\sigma^{-1}_p(b))\sigma_q(\sigma^{-1}_p(f(c)))z = \sigma_q(\sigma^{-1}_p(b))(f(c)z) = 0$ in $S^f_c$, thus proving the claim.

It is obvious that $\phi$ is a non-zero $L_K(R_n)$-homomorphism (since $\phi(z) = z$), so $\phi$ is an isomorphism.

(3) $(\Longrightarrow)$ Assume that $S^{f, \, p}_{c} \cong S^f_{\Pi_{\alpha}}$. We then have $S^{f, \, p}_{c} \cong S^f_{\alpha}$. By repeating the same method described in the proof of the direction $(\Longrightarrow)$ of Theorem \ref{Rangmod} (2), we obtain that $\alpha\in \Pi_c$. This implies 
$S^{f, \, p}_{c} \cong S^f_{c}=S^{f, \, 0}_{c}$, and so $p = r f(c)$ for some $r\in L_K(R_n)$, by item (2). 

$(\Longleftarrow)$ It immediately follows from item (2).

(4) Let $\phi: S^{f,\, p}_{c} \longrightarrow S^{f,\, p}_{c}$ is a nonzero $L_K(R_n)$-homomorphism. Since $ S^{f,\, p}_{c}$ is a simple left $L_K(R_n)$-module, $\phi$ is an isomorphism. Similar to item (2) we have $\phi((c^*)^{k} z) = a z$ for some non-zero element $a\in  K[c]/K[c]f(c)$ and some positive integer $k$.  Therefore, $\phi(r\ast ((c^*)^{k} z)) = r\ast (a z)$ for all $r\in L_K(R_n)$. Conversely, let $a$ be a nonzero element of $K[c]/K[c]f(c)$. Since  $S^{f,\, p}_{c}$ is a simple left $L_K(R_n)$-module, $S^{f,\, p}_{c} = L_K(R_n)\ast ((c^*)^{k} z)$. We claim that the map $\mu: S^{f,\, p}_{c}\longrightarrow S^{f,\, p}_{c}$, defined by $\mu(r\ast ((c^*)^{k} z) = r\ast (a z)$, is a nonzero $L_K(R_n)$-homomorphism. Indeed, assume that $r\ast (c^*)^{k} z = s\ast (c^*)^{k} z$, where $r, s\in L_K(R_n)$. We then have $\sigma_p(r) f_1(c)^k z = \sigma_p(s) f_1(c)^k z$ in $S^{f}_{c}$. By Theorem \ref{Rangmod} (1), we obtain that $(\sigma_p(r) - \sigma_p(s))f_1(c)^k = b f(c)$ for some $b\in L_K(R_n)$, and so $r\ast (a z) - s\ast (a z) = \sigma_p(r) a  z - \sigma_p(s)a  z = (\sigma_p(r) -\sigma_p(s)) a  z = (\sigma_p(r) -\sigma_p(s)) a c^k f_1(c)^k z= (\sigma_p(r) -\sigma_p(s))f_1(c)^k a c^k z = b f(c) a c^k z = b a c^k f(c)  z =  0$ (since $f(c) z =0$), that means, $\mu(r\ast ((c^*)^{k} z) = \mu(s\ast ((c^*)^{k} z)$. This implies that $\mu$ is well defined. It is not hard to check that $\mu$ is a $L_K(R_n)$-homomorphism. From these observations, we have  $\rm{End}$$_{L_K(R_n)}(S^{f,\, p}_{c}) \cong K[c]/K[c] f(c)\cong  K[x]/K[x] f(x)$.

(5) We first note that $S^{f,\, p}_{c} = L_K(R_n)\ast z$, {\it i.e.}, every element of $S^{f,\, p}_{c}$ is of the form $r\ast z= \sigma_p(r)z$, where $r\in L_K(R_n)$. We next computer $\text{ann}_{L_K(R_n)}(z)$. Indeed, let $r\in \text{ann}_{L_K(R_n)}(z)$. We then have $\sigma_p(r)z = r\ast z =0$ in $S^f_c$. By Theorem \ref{Rangmod} (1), $\sigma_p(r) = b f(c)$ for some $b \in L_K(R_n)$, and so $r = \sigma_p^{-1}(b) \sigma_p^{-1}(f(c)) = \sigma_p^{-1}(b) f(\sigma_p^{-1}(c))$, since $\sigma_p^{-1}$ is an endomorphism of the $K$-algebra $L_K(R_n)$. This implies $\text{ann}_{L_K(R_n)}(z) \subseteq L_K(R_n) f(\sigma_p^{-1}(c))$. 

Conversely, assume that $r \in  L_K(R_n) f(\sigma_p^{-1}(c))$, {\it i.e.}, $r = \beta f(\sigma_p^{-1}(c))$, where $\beta \in L_K(R_n)$. We then have $r\ast z = \sigma_p(r) z = \sigma_p(\beta) f(c) z = 0$ (since $f(c) z=0$), and so $r\in \text{ann}_{L_K(R_n)}(z)$, showing that $L_K(R_n) f(\sigma_p^{-1}(c))\subseteq\text{ann}_{L_K(R_n)}(z)$. Hence $\text{ann}_{L_K(R_n)}(z) = L_K(R_n) f(\sigma_p^{-1}(c))$. This implies $$S^{f,\, p}_{c} \cong L_K(R_n)/L_K(R_n)f(\sigma_p^{-1}(c)),$$ thus finishing the proof.	
\end{proof}

For clarification, we illustrate Theorem \ref{Irrrep1} by presenting the following example.

\begin{exas}
Let $K$ be a field and $R_2$ the rose with $2$ petals.  We then have
$C_s(R_2) = \{e_2,\, e^m_1 e_2\mid m\in \mathbb{Z},\, m\ge 1\}$, and $A_{R_2}(e_1, e_2)$ is the $K$-subalgebra of $L_K(R_2)$ generated by $v, \, e_1, \, e^*_2$, that means, $$A_{R_2}(e_1, e_2) = \{\sum^n_{i =1}k_i e^{m_i}_1 (e^*_2)^{l_i}\mid n\ge 1,\, k_i \in K,\, m_i, l_i\ge 0 \},$$ where $e^0_1 = v = (e^*_2)^0$, and $K[e_1] \subset A_{R_2}(e_1, e_2)$. Let $f(x) = 1 - x\in \text{Irr}(K[x])$. By the grading on $L_K(R_2)$, we always have $p \neq r (1- c)$ for all $c\in C_s(R_2)$, $p\in K[e_1]\setminus \{0\}$ and $r\in L_K(R_2)$, and so the set $$\{S^{f,\, p}_{c} \mid c \in C_s(R_2), \, p \in K[e_1]\}$$ consists of pairwise non-isomorphic simple left $L_K(R_2)$-modules, by Theorem \ref{Irrrep1}. 

Let $p = e_1$ and $q= e_1 e^*_2\in  A_{R_2}(e_1, e_2)$. We then have $q - p = e_1 e^*_2 - e_1 = e_1 e^*_2 (1 - e_2)$, so $S^{f,\, p}_{e_2} \cong S^{f,\, q}_{e_2}$ as left $L_K(R_2)$-modules, by Theorem \ref{Irrrep1}. 
\end{exas}

Using Theorems \ref{Chenmod}, \ref{Rangmod} and \ref{Irrrep1} we obtain a list of pairwise non-isomorphic simple modules for the Leavitt path algebra $L_K(R_n)$. Before doing so, we need some useful notions. For each pair $(f, c) \in \text{Irr}(K[x])\times C_s(R_n)$, we define a relation $\equiv_{f, c}$ on $A_{R_n}(e_1, e_n)$ as follows.
For all $p, q\in A_{R_n}(e_1, e_n)$, $p\equiv_{f, c} q$ if and only if $p-q = rf(c)$ for some $r\in L_K(R_n)$. It is obvious that $\equiv_{f, c}$ is an equivalence on $A_{R_n}(e_1, e_n)$. We denote by $[p]$ the $\equiv_{f, c}$ equivalent class of $p$.

\begin{thm}\label{Irrrep2} 
Let $K$ be a field, $n\ge 2$ a positive integer, and $R_n$ the rose with $n$ petals. Then, the following set 
$$\{V_{[\alpha]}\mid \alpha\in (R_n)^{\infty}_{irr}\} \sqcup \{S^f_{\Pi_c}\mid c\in SCP(R_n),\, f \in \textnormal{Irr}(K[x])\}\, \sqcup$$
\[\sqcup\, \{S^{f,\, p}_{d} \mid d\in C_s(R_n),\, f \in \textnormal{Irr}(K[x]),\, [0]\neq [p]\in A_{R_n}(e_1, e_2)/\equiv_{f, d}\}\]
consists of pairwise non-isomorphic simple left $L_K(R_n)$-modules.	
\end{thm}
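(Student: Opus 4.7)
The plan is to verify two things: that each module appearing in the union is simple, and that no two distinct members of the union are isomorphic. Simplicity follows at once from Theorems~\ref{Chenmod}(1), \ref{Rangmod}(1), and \ref{Irrrep1}(1), with the well-definedness of $S^f_{\Pi_c}$ noted in the remark following Theorem~\ref{Rangmod}.

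For pairwise non-isomorphism, I would first dispose of the three intra-family comparisons, which are immediate from the theorems already cited: Theorem~\ref{Chenmod}(3) distinguishes distinct tail-equivalence classes of irrational paths, Theorem~\ref{Rangmod}(2) distinguishes distinct pairs $(f,\Pi_c)$, and Theorem~\ref{Irrrep1}(2) distinguishes distinct triples $(f,d,[p])$. The side condition $[p]\neq[0]$ in the third index is precisely what is required to avoid redundancy with the untwisted members $S^f_d=S^{f,0}_d$ of the second family.

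The inter-family comparisons involving $V_{[\alpha]}$ with $\alpha$ irrational are handled by finite presentation. Theorem~\ref{Rangmod}(1) identifies $S^f_{\Pi_c}$ with $L_K(R_n)v/L_K(R_n)f(c)$ and Theorem~\ref{Irrrep1}(5) identifies $S^{f,p}_d$ with $L_K(R_n)/L_K(R_n)f(\sigma_p^{-1}(d))$; both are cyclic with a single defining relator, hence finitely presented. On the other hand, \cite[Corollary~3.5]{anhnam} asserts that $V_{[\alpha]}$ is not finitely presented for $\alpha\in(R_n)^\infty_{irr}$. Since finite presentation is an isomorphism invariant, no $V_{[\alpha]}$ can be isomorphic to a module in either of the two cycle-based families.

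The main obstacle is ruling out $S^f_{\Pi_c}\cong S^{g,p}_d$ whenever $[p]\neq[0]$. I would suppose such an isomorphism $\phi$ exists, pick any simple closed path $c_0\in\Pi_c$ so that $S^f_{\Pi_c}\cong S^f_{c_0}$, and adapt the annihilation argument from the direction $(\Longrightarrow)$ of Theorem~\ref{Rangmod}(2) that is already reused in Theorem~\ref{Irrrep1}(2); the key technical ingredient in the twisted setting is Lemma~\ref{Scalarmult}(2), which converts the twisted action of $(d^*)^l$ on $z'\in S^{g,p}_d$ to its untwisted counterpart $(d^*)^lz'=g_1(d)^lz'$. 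Concretely, writing $\phi^{-1}(z')=\sum_j\beta_jb_jz$ in the canonical basis of $S^f_{c_0}$ and computing $\phi^{-1}(g_1(d)^lz')=(d^*)^l\phi^{-1}(z')$ for $l$ sufficiently large shows that the right-hand side vanishes (while the left-hand side does not) unless $\beta_jc_0^\infty=d^\infty$ for some $j$, which forces $c_0^\infty\sim d^\infty$. Since $d\in C_s(R_n)$ contains exactly one occurrence of $e_2$ (as terminal letter), every element of $\Pi_d$ does too; hence $c_0\in\Pi_d$ admits a unique cyclic shift $c_0'$ placing $e_2$ at the end, and this shift lies in $\Pi_c\cap C_s(R_n)=\{d\}$, so that $c_0'=d$. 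Consequently $S^f_{\Pi_c}\cong S^f_{c_0'}=S^{f,0}_{c_0'}$ via $\sigma_0=\mathrm{id}_{L_K(R_n)}$, and applying Theorem~\ref{Irrrep1}(2) to this isomorphism with $S^{g,p}_d$ yields $f=g$ and $0-p=rf(d)$ for some $r\in L_K(R_n)$, i.e., $[p]=[0]$, contradicting the hypothesis.
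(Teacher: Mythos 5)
Your proof is correct and follows the same overall decomposition as the paper: simplicity and the three intra-family distinctions come from Theorems \ref{Chenmod}, \ref{Rangmod} and \ref{Irrrep1}, and the modules $V_{[\alpha]}$ with $\alpha$ irrational are separated from the two cycle-based families by finite presentation (via Theorem \ref{Rangmod}(1), Theorem \ref{Irrrep1}(5) and \cite[Corollary 3.5]{anhnam}), exactly as in the paper. The one place you genuinely diverge is the comparison of $S^{g}_{\Pi_c}$ with $S^{f,\,p}_{d}$: the paper disposes of it by citing Theorem \ref{Irrrep1}(3), whose literal statement only covers the case of a common irreducible polynomial, whereas you rerun the annihilation argument (with Lemma \ref{Scalarmult}(2) converting the twisted action of $(d^*)^l$ on the generator to the untwisted one) to force $c^{\infty}\sim d^{\infty}$, identify $\Pi_c=\Pi_d$ and hence $S^{g}_{\Pi_c}\cong S^{g,\,0}_{d}$ with $d\in C_s(R_n)$, and then invoke Theorem \ref{Irrrep1}(2), which simultaneously yields $f=g$ and $[p]=[0]$. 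This buys a self-contained treatment of the mixed-polynomial case ($f\neq g$), which the paper's one-line citation leaves implicit; apart from that refinement, the two arguments coincide.
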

\begin{proof}
All $V_{[\alpha]}$ ($\alpha \in (R_n)^{\infty}_{irr}$) are pairwise non-isomorphic by Theorem \ref{Chenmod}. All $S^f_{\Pi_c}$ ($ c\in SCP(R_n),\, f \in \textnormal{Irr}(K[x])$) are pairwise non-isomorphic by Theorem \ref{Rangmod}. All $S^{f,\, p}_{d}$ ($d\in C_s(R_n),\, f \in \textnormal{Irr}(K[x]),\, [0]\neq [p]\in A_{R_n}(e_1, e_2)/\equiv_{f, d}$) are pairwise non-isomorphic by Theorem \ref{Irrrep1}. 
	
By Theorem \ref{Chenmod} and \ref{Irrrep1} respectively, $S^f_{\Pi_c}$ ($ c\in SCP(R_n),\, f \in \textnormal{Irr}(K[x])$) and $S^{f,\, p}_{d}$ ($d\in C_s(R_n),\, f \in \textnormal{Irr}(K[x]),\, [0]\neq p\in A_{R_n}(e_1, e_2)$) are finitely presented. While by \cite[Corollary 3.5]{anhnam}, $V_{[\alpha]}$ is not finitely presented for all $\alpha\in (R_n)^{\infty}_{irr}$. Therefore each $V_{[\alpha]}$ is neither isomorphic to any $S^f_{\Pi_c}$ nor any $S^{f,\, p}_{d}$.

By Theorem \ref{Irrrep1} (3), each $S^{f,\, p}_{d}$ ($d\in C_s(R_n),\, f \in \textnormal{Irr}(K[x]),\, [0]\neq [p]\in A_{R_n}(e_1, e_2)/\equiv_{f, d}$) is not isomorphic to any  $S^f_{\Pi_c}$ ($ c\in SCP(R_n),\, f \in \textnormal{Irr}(K[x])$), thus finishing the proof.
\end{proof}

\vskip 0.5 cm \vskip 0.5cm {
	
\end{document}